\providecommand{\noopsort[1]{}}
\numberwithin{equation}{section}
\setlist{leftmargin=*}
\setlist[1]{labelindent=1.2\parindent}
\newtheorem{thm}{Theorem}[section]
\newtheorem{coro}[thm]{Corollary}
\newtheorem{prop}[thm]{Proposition}
\newtheorem{lm}[thm]{Lemma}
\theoremstyle{remark}
\newtheorem{rmk}[thm]{Remark}
\newtheorem{hyp}[thm]{Hypotheses}
\newtheorem{examp}[thm]{Example}
\newcommand{\coloneqq}{\mathrel{\mathop:}=}
\renewcommand{\Re}{{\rm Re}\,}
\renewcommand{\Im}{{\rm Im}\,}
\newcommand{\eps}{\varepsilon}
\newcommand{\loc}{\mathrm{loc}}
\newcommand{\R}{\mathds{R}}
\newcommand{\C}{\mathds{C}}
\newcommand{\N}{\mathds{N}}
\newcommand{\tnorm}[1]{{|\kern-0.3ex|\kern-0.3ex|#1|\kern-0.3ex|\kern-0.3ex|}}
\newcommand{\la}{\langle}
\newcommand{\ra}{\rangle}
\DeclareMathOperator{\rg}{\mathrm{rg}}
\let\div\undefined
\DeclareMathOperator{\div}{\mathrm{div}}
\begin{document}
\title{Vector-valued  Schr\"odinger operators in $L^p$-spaces}
\author{M. Kunze}
\address{Universit\"at Konstanz, Fachbereich Mathematik und Statistik, 78457 Konstanz, Germany}
\email{markus.kunze@uni-konstanz.de}
\author{A. Maichine}
\address{Dipartimento di Matematica, Universit\`a degli Studi di Salerno, Via Giovanni Paolo II 132, I-84084 Fisciano (SA), Italy}
\email{amaichine@unisa.it}
\author{A. Rhandi}
\address{Dipartimento di Ingegneria dell'Informazione, Ingegneria Elettrica e Matematica Applicata, Università degli Studi di Salerno, Via Ponte Don Melillo 1, 84084 Fisciano (Sa), Italy}
\thanks{This work has been supported by the M.I.U.R. research project
Prin 2015233N54 "Deterministic and Stochastic Evolution Equations". The second author is a member of the Gruppo Nazionale per l'Analisi Matematica, la Probabilit\`{a} e le loro Applicazioni
(GNAMPA) of the Istituto Nazionale di Alta Matematica (INdAM).}
\address{Dipartimento di Ingegneria dell'Informazione, Ingegneria Elettrica e Matematica Applicata, Università degli Studi di Salerno, Via Ponte Don Melillo 1, 84084 Fisciano (Sa), Italy}
\email{arhandi@unisa.it}
\keywords{System of PDE, Schr\"odinger operator, strongly continuous semigroup, domain characterization}
\subjclass[2010]{Primary: 35K40, 47D08; Secondary: 47D06}
\begin{abstract}
In this paper we consider vector-valued Schr\"odinger operators of the form $\div(Q\nabla u)-Vu$, where $V=(v_{ij})$ is a nonnegative locally bounded matrix-valued function and $Q$ is a symmetric, strictly elliptic matrix whose entries are bounded and continuously differentiable with bounded derivatives.
Concerning the potential $V$, we assume an that it is pointwise accretive and that its entries are in $L^\infty_{\loc}(\R^d)$.
Under these assumptions, we prove that a realization of the vector-valued Schr\"odinger operator generates a $C_0$-semigroup of contractions in $L^p(\R^d; \C^m)$. Further properties are also investigated.
\end{abstract}
\maketitle
\section{Introduction}

Recently, there is an increased interest in systems of parabolic equations with unbounded coefficients. Such systems appear in the study of backward-forward differential games, in connection with Nash equilibria in stochastic differential games, in the analysis of the weighted $\overline{\partial}$-problem in $\C^d$, in time dependent Born--Openheimer theory and also in the study of Navier--Stokes equation. For more information we refer the reader to
\cite[Section 6]{aalt}, \cite{Ha-He}, \cite{Dall}, \cite{BGT}, \cite{hieber}, \cite{HRS} and \cite{Ha-Rh}.

While the scalar theory of such equations is by now well understood (see \cite{lb07} and the references therein),  so far there are only few articles concerned with systems. We mention the article \cite{hetal09} where systems of parabolic equations coupled through both, a potential term and a drift term, were considered in the $L^p$-setting.
In \cite{aalt, alp16, dl11}, the authors choose a different approach. Indeed, they first constructed solutions in the space of bounded and continuous functions and only afterwards the obtained semigroup is extrapolated to the $L^p$-scale. We should point out
that in the presence of an unbounded drift term the differential operator does not always generate a strongly continuous semigroup on $L^p$-spaces with respect to Lebesgue measure, see \cite{prs06}. Thus, in some cases appropriate growth conditions need to be imposed on the coefficients to ensure generation of a semigroup on $L^p$ with respect to Lebesgue measure.

In this article we will consider systems of parabolic equations which are coupled only through a potential term.
To be more precisely, consider the differential operator
\begin{equation}\label{eq. mathcal A=}
\mathcal{A}u=\div(Q\nabla u)-Vu=\colon\Delta_Q u-Vu
\end{equation}
acting on vector-valued functions $u=(u_1,\dots,u_m)\colon \R^d\to\C^m$. Here $Q$ is a bounded, symmetric and stricly elliptic matrix with continuously differentiable entries that have bounded derivatives. The expression $\div (Q\nabla u)$
should be understood componentwise, i.e.\ $\div (Q\nabla u ) = (\div (Q\nabla u_1), \ldots, \div (Q\nabla u_m))$.
The matrix-valued function $V: \R^d \to \R^{m\times m}$ is assumed to be pointwise accretive and to have locally bounded coefficients. In contrast to the situation where an unbounded drift is present, no additional growth assumptions on the potential $V$ are needed to ensure generation of a strongly continuous semigroup on $L^2(\R^d; \R^m)$.
Indeed, following Kato \cite{Kato78}, who considered the scalar situation, we shall construct a densely defined,
m-dissipative realization $A$ of the operator $\mathcal{A}$ in $L^2(\R^d;\R^m)$. By virtue of the Lumer--Phillips theorem $A$ generates a strongly continuous semigroup. Subsequently, we prove that this semigroups extrapolates to
a consistent family of strongly continuous contraction semigroups $\{T_p(t)\}_{t\geq 0}$ on $L^p(\R^d; \R^m)$
for $1<p<\infty$. We also give a description of the generator $A_p$ of $\{T_p(t)\}_{t\geq 0}$ and prove that the test functions form a core for the operator $A_p$.

We should point out that in our recent article \cite{KLMR} we were studying a similar setting. However, in \cite{KLMR} we were interested in proving that the domain of the vector-valued Schr\"odinger operator is the intersection of the domain of the diffusion part and the potential part. To that end, we had to impose growth conditions on the Potential part. Here, we allow general potential without such a growth condition. The price to pay is that we can only characterize the domain of the $L^p$-realization of our operator as the maximal $L^p$-domain.

This article is organised as follows. In Section \ref{s.kato} we prove a version of Kato's inequality for vector-valued functions which is crucial in all subsequent sections. In Section \ref{sec. gen of sg in L2} we construct a realization
of the operator $\mathcal{A}$ in $L^2(\R^d;\R^m)$ which generates a strongly continuous contraction semigroup.
In Section \ref{Extension to Lp}, we extrapolate the semigroup to $L^p$-spaces, where $p\in (1,\infty)$.
In the concluding Section \ref{sec. maximal domain} we characterize the domain of the generator as maximal domain.

\textbf{Notation} Let $d,m\geq 1$. By $|\cdot |$ we denote the Euclidean norm on $\C^j$, $j=d,m$ and by $\langle \cdot,\cdot \rangle$ the Euclidean inner product. By $B(r)=\{x\in\R^d : |x|\leq r \}$ we denote the Euclidean ball of radius $r>0$ and center $0$. For $1\leq p\leq \infty$, $L^{p}(\R^d;\C^m)$ is the $\C^m$- valued Lebesgue space on $\R^d$. For $1\leq p< \infty$, the norm is given by
\[ \|f\|_p \coloneqq \left(\int_{\R^d} |f(x)|^p dx \right)^\frac{1}{p}=\left(\int_{\R^d} \Big(\sum_{j=1}^{m}|f_j|^2\Big)^{\frac{p}{2}} dx \right)^\frac{1}{p}, \quad f\in
 L^{p}(\R^d,\C^m),\]
 whereas in the case $p=\infty$ we use the essential supremum norm
 \[
 \|f\|_\infty \coloneqq \operatorname{ess} \sup \{ |f(x)| : x\in \R^d\}.
 \]
For $1<p<\infty$, $p'$ refers to the conjugate index, i.e. $1/p+1/p'=1$. Thus $L^{p'}(\R^d;\C^m)$ is the dual space of $L^{p}(\R^d,\C^m)$ and the duality pairing $\langle \cdot,\cdot \rangle_{p,p'}$ is given by
\[ \langle f,g\rangle_{p,p'}=\int_{\R^d}\langle f(x),g(x)\rangle dx,\qquad \mbox{for} f\in L^{p}(\R^d;\C^m), g \in L^{p'}(\R^d;\C^m). \]
By $C_c^\infty (\R^d;\C^m)$, we denote the space of all test functions, i.e.\ functions $f:\R^d\to \C^m$ which have compact support and derivatives of any order. The space $W^{k,p}(\R^d;\C^m)$ is the classical Sobolev space of order $k$, that is the space of all functions  $f\in L^{p}(\R^d;\C^m)$ such that the distributional derivative $\partial^\alpha f$ belongs to $L^{p}(\R^d;\C^m)$ for all $\alpha=(\alpha_1,\dots,\alpha_d)\in\N^d$ with $|\alpha|=\sum_{j=1}^{d}\alpha_j\leq k$. For $1\leq p \leq \infty$ we define $L^p_\loc(\R^d;R^m)$ as the space of all
measurable functions $f: \R^d\to \R^m)$ such that $\chi_{B(r)}f \in L^p(\R^d;\R^m)$ for all $r>0$. Here $\chi_{B(r)}$ is the indicator function of the ball $B(r)$. The space $W^{k,p}_{\loc} (\R^d; \R^m)$ is the space of all functions $f\in L^p_\loc (\R^d;\R^m)$ such that for $\alpha \in \N^d$ with $|\alpha|\leq k$ the distributional derivative
$\partial^\alpha f$ belongs to $L^p_\loc (\R^d;\R^m)$. We write $H^k(\R^d;\C^m) \coloneqq W^{k,2}(\R^d;\C^m)$ and
$H^k_\loc (\R^d;\C^m) \coloneqq W^{k,2}_\loc (\R^d;\C^m)$.

\section{Preliminaries}\label{s.kato}
Troughout this article we make the following assumptions:
\begin{hyp}\label{Hyp}
\begin{enumerate}
\item The map $Q:\R^d\to\R^{d\times d}$ is such that $q_{ij}=q_{ji}$ is bounded and continuously differentiable with bounded derivative for all $i,j\in\{1,\dots, d\}$ and there exist positive real numbers $\eta_1$ and $\eta_2$ such that
\begin{equation}\label{ellip of Q}
\eta_1|\xi|^2\le \la Q(x)\xi,\xi\ra\le \eta_2|\xi|^2
\end{equation}
for all $x,\xi\in\R^d$.
\item The map $V:\R^d\to\R^{m\times m}$ has entries $v_{ij}\in L^\infty_\loc(\R^d)$ for all $i,j\in\{1,\dots,m\}$ and
\begin{equation}\label{eq.diss}
\Re \la V(x)\xi,\xi\ra\ge 0,
\end{equation}
for all $x\in\R^d$, $\xi\in\C^m$.
\end{enumerate}
\end{hyp}
To simplify  notations, we write for $\xi, \eta \in \R^d$
\[
\langle \xi, \eta\rangle_Q \coloneqq \sum_{i,j=1}^d q_{ij}\xi_i\eta_j\quad\mbox{and}\quad
|\xi|_Q \coloneqq \sqrt{\langle \xi, \xi\rangle_Q}.
\]
 We define the operator $\Delta_Q : W^{1,1}_\loc (\R^d)  \to \mathscr{D}(\R^d)$ by setting
\begin{equation}\label{eq.deltaq}
\langle \Delta_Q u, \varphi\rangle = - \int_{\R^d} \langle \nabla u, \nabla \varphi\rangle_Q\, dx.
\end{equation}
for any test function $\varphi \in C_c^\infty(\R^d)$, where $\mathscr{D}(\R^d)$ denotes the space of distributions. As usual, we will say that $\Delta_Qu \in L^1_\loc(\R^d)$, if
there is a function $f\in L^1_\loc(\R^d)$ such that
\[
\langle \Delta_Qu, \varphi\rangle = \int_{\R^d} f\varphi\, dx
\]
for all $\varphi \in C_c^\infty(\R^d)$. In this case we will identify $\Delta_Q u$ and the function $f$.

The following lemma, taken from \cite[Lemma 2.4]{Mai-Rha},  generalizes Stampacchia's result concerning the weak derivativ  of the absolute value of an $W^{1,p}$-function, see \cite[Lemma 7.6]{GilTru}, to vector-valued functions..
\begin{lm}\label{l.normsobolev}
 Let $1<p<\infty$ and $u=(u_1(x),\dots,u_m(x))\in W^{1,p}_\loc(\R^d;\C^m)$. Then,  $|u|\in W^{1,p}_\loc (\R^d)$ and
 \begin{equation}\label{gradient of |.|=}
 \nabla|u|=\frac{1}{|u|}\sum_{j=1}^{m}\Re(\bar{u}_j\nabla u_j)\chi_{\{u\ne 0\}}.
 \end{equation}
 Moreover,
 \begin{equation}\label{gradient |.| <...}
 |\nabla |u||_Q^2 \leq\sum_{j=1}^{d}|\nabla u_j|_Q^2.
 \end{equation}
 \end{lm}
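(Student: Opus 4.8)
The plan is to mollify the Euclidean norm. For $\eps>0$ set $u^\eps\coloneqq(|u|^2+\eps^2)^{1/2}$. The advantage of this regularisation is that the map $\xi\mapsto(|\xi|^2+\eps^2)^{1/2}$, viewed on $\C^m\cong\R^{2m}$, is smooth, globally $1$-Lipschitz and has Euclidean gradient of norm at most $1$; moreover $0\le u^\eps-|u|\le\eps$, so $u^\eps\to|u|$ uniformly (hence in $L^p_{\loc}$, as $|u|\in L^p_{\loc}$) when $\eps\downarrow0$. Throughout I write $u_j=a_j+ib_j$ and $\nabla u_j=\alpha_j+i\beta_j$ with $a_j,b_j$ real-valued and $\alpha_j,\beta_j$ taking values in $\R^d$, so that $\Re(\bar u_j\nabla u_j)=a_j\alpha_j+b_j\beta_j$.

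\textbf{Step 1 (chain rule for $u^\eps$).} I would first show that $u^\eps\in W^{1,p}_{\loc}(\R^d)$ with
\[
\nabla u^\eps=\frac{1}{u^\eps}\sum_{j=1}^m\Re(\bar u_j\nabla u_j).
\]
For this, mollify $u$: the functions $u_n\coloneqq u*\rho_n$ are smooth and $u_n\to u$ in $W^{1,p}_{\loc}(\R^d;\C^m)$. For smooth $u_n$ the classical chain rule gives $\nabla u_n^\eps=(u_n^\eps)^{-1}\sum_j\Re(\bar u_{n,j}\nabla u_{n,j})$. Letting $n\to\infty$ in $\int u_n^\eps\,\partial_i\varphi\,dx=-\int(\partial_iu_n^\eps)\varphi\,dx$ for $\varphi\in C_c^\infty(\R^d)$: the left side converges because $u_n^\eps\to u^\eps$ in $L^p_{\loc}$ by the Lipschitz bound; on the right the scalar factors $(u_n^\eps)^{-1}\bar u_{n,j}$ have modulus $\le1$ and converge a.e.\ (along a subsequence), while $\nabla u_{n,j}\to\nabla u_j$ in $L^p_{\loc}$, so the right-hand side converges in $L^1_{\loc}$. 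This yields the displayed formula. Since $|(u^\eps)^{-1}\bar u_j|\le1$, its right-hand side is dominated pointwise by $\sum_j|\nabla u_j|\in L^p_{\loc}$, uniformly in $\eps$; in particular $\nabla u^\eps\in L^p_{\loc}$.

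\textbf{Step 2 (the $Q$-inequality for $u^\eps$).} By \eqref{ellip of Q} and symmetry of $Q$, $\la\cdot,\cdot\ra_Q$ is an inner product on $\R^d$, so two applications of Cauchy--Schwarz give, a.e.,
\begin{align*}
u^\eps\,|\nabla u^\eps|_Q
&=\Bigl|\sum_{j=1}^m(a_j\alpha_j+b_j\beta_j)\Bigr|_Q
\le\sum_{j=1}^m\bigl(|a_j|\,|\alpha_j|_Q+|b_j|\,|\beta_j|_Q\bigr)\\
&\le\Bigl(\sum_{j=1}^m(a_j^2+b_j^2)\Bigr)^{1/2}\Bigl(\sum_{j=1}^m(|\alpha_j|_Q^2+|\beta_j|_Q^2)\Bigr)^{1/2}
=|u|\,\Bigl(\sum_{j=1}^m|\nabla u_j|_Q^2\Bigr)^{1/2},
\end{align*}
using $|u_j|^2=a_j^2+b_j^2$ and $|\nabla u_j|_Q^2=|\alpha_j|_Q^2+|\beta_j|_Q^2$ (the cross terms cancel by symmetry of $Q$). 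As $u^\eps\ge|u|$, this gives $|\nabla u^\eps|_Q^2\le\sum_{j=1}^m|\nabla u_j|_Q^2$ a.e.

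\textbf{Step 3 ($\eps\downarrow0$) and the main obstacle.} As $\eps\downarrow0$, $\nabla u^\eps$ converges pointwise a.e.\ to $\frac1{|u|}\sum_j\Re(\bar u_j\nabla u_j)\chi_{\{u\neq0\}}$ (on $\{u=0\}$ all $a_j,b_j$ vanish, hence so does $\nabla u^\eps$) and is dominated by $\sum_j|\nabla u_j|\in L^p_{\loc}$ uniformly in $\eps$; together with $u^\eps\to|u|$ in $L^p_{\loc}$, dominated convergence lets me pass to the limit in the weak-gradient identity of Step 1 and conclude $|u|\in W^{1,p}_{\loc}(\R^d)$ and \eqref{gradient of |.|=}. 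Then \eqref{gradient |.| <...} follows by letting $\eps\downarrow0$ in Step 2, since its left-hand side $|\nabla u^\eps|_Q^2$ converges pointwise a.e.\ to $|\nabla|u||_Q^2$ and its right-hand side does not depend on $\eps$. The delicate point is Step 1: one cannot simply invoke the scalar chain rule \cite[Lemma 7.6]{GilTru} with outer function $t\mapsto(t+\eps^2)^{1/2}$ and $t=|u|^2$, because when $p<2$ the candidate gradient $2\sum_j\Re(\bar u_j\nabla u_j)$ of $|u|^2$ lies only in $L^{p/2}_{\loc}$, so $|u|^2$ need not be in $W^{1,1}_{\loc}$; treating $\xi\mapsto(|\xi|^2+\eps^2)^{1/2}$ directly as a globally Lipschitz function of the vector $u$, via mollification and the bound $|(u_n^\eps)^{-1}\bar u_{n,j}|\le1$, is what makes the argument work for every $p\in(1,\infty)$.
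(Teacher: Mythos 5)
Your argument is correct, and it is essentially the approach behind the result as the paper uses it: Lemma \ref{l.normsobolev} is quoted from \cite[Lemma 2.4]{Mai-Rha}, and both that source and the paper's own proof of Proposition \ref{p.kato} work with exactly your regularisation $a_\eps(u)=(|u|^2+\eps^2)^{1/2}-\eps$ and its gradient $\frac{1}{a_\eps(u)+\eps}\sum_j\Re(\bar u_j\nabla u_j)$, passing to the limit $\eps\downarrow 0$ with the same domination by $\sum_j|\nabla u_j|$. Your Step 1 (mollifying $u$ to justify the chain rule for the smooth Lipschitz outer function on $\C^m\cong\R^{2m}$, rather than composing with $|u|^2$, which fails for $p<2$) and the Cauchy--Schwarz computation in the $Q$-inner product are both sound; note only that the index bound $\sum_{j=1}^{d}$ in \eqref{gradient |.| <...} is a typo for $\sum_{j=1}^{m}$, which you correctly use.
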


We can now prove a vector-valued version of Kato's inequality.
\begin{prop}\label{p.kato}
Let $u = (u_1, \ldots, u_m) \in H^1_\loc (\R^d;\R^m)$ be such that $\Delta_Qu_j \in L^1_\loc (\R^d)$ for
$j=1, \ldots, m$. Then
\begin{equation}\label{eq.DeltaQ}
\Delta_Q |u| = \chi_{\{u\neq 0\}} \frac{1}{|u|}\Big( \sum_{j=1}^m u_j \Delta_Qu_j + \sum_{j=1}^m |\nabla u_j|_Q^2 - |\nabla |u||_Q^2\Big).
\end{equation}
Thus, the Kato inequality
\begin{equation}\label{eq.kato}
\Delta_Q|u| \geq \chi_{\{u\neq 0\}} \frac{1}{|u|}\sum_{j=1}^m u_j \Delta_Qu_j
\end{equation}
holds in the sense of distributions.
\end{prop}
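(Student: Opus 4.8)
The plan is to first establish \eqref{eq.DeltaQ} under the additional assumption that $u$ is smooth and then remove it by a mollification argument. In the smooth case the identity is a direct computation: away from the zero set of $u$ we have $|u| \in C^\infty$ and, using \eqref{gradient of |.|=} from Lemma \ref{l.normsobolev}, we compute $\nabla |u| = |u|^{-1}\sum_j u_j \nabla u_j$ (the functions being real-valued here). Taking $\Delta_Q$, i.e.\ applying $\div(Q\nabla \cdot)$, and using the product and quotient rules together with $\langle \nabla |u|, \nabla |u|\rangle_Q = |\nabla|u||_Q^2$, we obtain \eqref{eq.DeltaQ} pointwise on $\{u \neq 0\}$. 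On the interior of $\{u = 0\}$ both sides vanish; the boundary $\partial\{u\neq 0\}$ is Lebesgue-null for smooth $u$ by Sard-type arguments, so the identity holds a.e., hence distributionally.

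To pass to the general case, fix a standard mollifier $(\rho_\eps)$ and set $u^\eps \coloneqq \rho_\eps * u$, which is smooth, converges to $u$ in $H^1_\loc$, and satisfies $\Delta_Q(u_j^\eps) = \rho_\eps * (\Delta_Q u_j) + R_j^\eps$ where $R_j^\eps$ collects the Friedrichs commutator terms coming from the fact that $Q$ is variable. Because $q_{ij}$ is $C^1$ with bounded derivatives, the Friedrichs commutator lemma gives $R_j^\eps \to 0$ in $L^1_\loc$; and $\rho_\eps * (\Delta_Q u_j) \to \Delta_Q u_j$ in $L^1_\loc$ since $\Delta_Q u_j \in L^1_\loc$ by hypothesis. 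Applying the smooth identity to $u^\eps$ and testing against an arbitrary $\varphi \in C_c^\infty(\R^d)$, we must pass to the limit in each of the four terms on the right-hand side of \eqref{eq.DeltaQ} (with $u^\eps$ in place of $u$). The left-hand side converges because $|u^\eps| \to |u|$ in $L^1_\loc$. For the right-hand side, $\nabla u_j^\eps \to \nabla u_j$ in $L^2_\loc$ handles the quadratic terms $|\nabla u_j^\eps|_Q^2$, and $\nabla|u^\eps| \to \nabla|u|$ in $L^2_\loc$ (again by Lemma \ref{l.normsobolev} applied to the convergence $u^\eps \to u$ in $H^1_\loc$, or by lower semicontinuity combined with the pointwise formula) handles $|\nabla|u^\eps||_Q^2$.

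The delicate term is $\chi_{\{u^\eps \neq 0\}}|u^\eps|^{-1}\sum_j u_j^\eps \Delta_Q u_j^\eps$, where the factor $|u^\eps|^{-1}$ is singular. The key observation is that the combination $\chi_{\{u^\eps\neq0\}}|u^\eps|^{-1} u_j^\eps$ is bounded by $1$ in modulus, so no blow-up occurs: writing the term as $\sum_j \big(\chi_{\{u^\eps\neq0\}}|u^\eps|^{-1}u_j^\eps\big)\cdot \big(\rho_\eps*\Delta_Qu_j + R_j^\eps\big)$, the second factor converges in $L^1_\loc$ while the first is bounded and converges a.e.\ (on $\{u\neq 0\}$, to $|u|^{-1}u_j$; and on the interior of $\{u=0\}$ the corresponding contribution to both sides tends to zero), so by dominated convergence we may pass to the limit after integrating against $\varphi$. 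I expect this handling of the singular prefactor, together with verifying that the set $\{u=0\}$ contributes nothing in the limit, to be the main obstacle; the Friedrichs commutator estimate for the variable-coefficient operator $\Delta_Q$ is the other technical point, but it is standard. Once \eqref{eq.DeltaQ} is established, inequality \eqref{eq.kato} is immediate from \eqref{gradient |.| <...} in Lemma \ref{l.normsobolev}, which gives $\sum_j|\nabla u_j|_Q^2 - |\nabla|u||_Q^2 \geq 0$, so the extra terms on the right of \eqref{eq.DeltaQ} are nonnegative.
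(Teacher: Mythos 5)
Your strategy (prove \eqref{eq.DeltaQ} for smooth $u$, then mollify $u$) is genuinely different from the paper's, which never smooths $u$: it regularizes the modulus via $a_\eps(u)=(|u|^2+\eps^2)^{1/2}-\eps$, integrates by parts against a test function, and passes to the limit using that $u_j/(a_\eps(u)+\eps)$ is bounded by $1$ and that $(a_\eps(u)+\eps)^{-1}$ \emph{increases monotonically} to $|u|^{-1}$, so the singular quadratic terms can be handled by monotone convergence (plus Stampacchia's lemma to discard $\{u=0\}$). Your route has a fatal gap already in the ``smooth case''. An a.e.\ identity between the right-hand side of \eqref{eq.DeltaQ} and the pointwise values of $\Delta_Q|u|$ on $\{u\neq0\}\cup\operatorname{int}\{u=0\}$ does \emph{not} yield the distributional identity, because the distribution $\Delta_Q|u|$ may carry a singular part concentrated on the null set $\{u=0\}$ --- that singular part is precisely what Kato's inequality is about. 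Concretely, for $d=m=1$, $Q=1$, $u(x)=x$, your argument would conclude $\Delta|x|=0$ in $\mathscr{D}(\R)$, whereas $\Delta|x|=2\delta_0$. (The ``Sard-type'' claim is also wrong: Sard's theorem controls critical values, not the measure of the zero set; a smooth function can vanish exactly on a closed set of positive measure with empty interior. But even when $\partial\{u\neq0\}$ is null, as in the example, the conclusion fails.)

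Second, in the limit passage you treat $\chi_{\{u^\eps\ne0\}}|u^\eps|^{-1}|\nabla u^\eps_j|_Q^2$ and $\chi_{\{u^\eps\ne0\}}|u^\eps|^{-1}\,|\nabla|u^\eps||_Q^2$ as if the weight $|u^\eps|^{-1}$ were absent: $L^2_\loc$-convergence of $\nabla u_j^\eps$ gives $L^1_\loc$-convergence of $|\nabla u_j^\eps|_Q^2$, but says nothing about the product with the unbounded factor $\chi_{\{u^\eps\neq 0\}}|u^\eps|^{-1}$, for which you have neither a dominating function nor (unlike for $a_\eps$) any monotonicity in $\eps$. This is exactly where the paper's choice of regularization earns its keep; you only address the singular prefactor in the term containing $\Delta_Q u_j^\eps$, where it is tamed by the extra factor $u_j^\eps$. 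A more minor issue is the Friedrichs commutator: for the second-order operator $\div(Q\nabla\cdot)$ with $u\in H^1_\loc$ the standard lemma gives convergence of the commutator to $0$ in $H^{-1}_\loc$, and your assertion of $L^1_\loc$-convergence needs an argument. Your final step, deducing \eqref{eq.kato} from \eqref{eq.DeltaQ} via \eqref{gradient |.| <...}, coincides with the paper's and is fine once \eqref{eq.DeltaQ} is in hand.
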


\begin{proof}
Let us consider the function $a_\eps(u)= \big(|u|^2+\eps^2\big)^\frac{1}{2} -\eps$. It was proved in \cite[Lemma 2.4]{Mai-Rha} that $\lim_{\eps \to 0}a_\eps(u) = |u|$ in $H^1_{\loc}(\R^d)$. This implies that $\lim_{\eps \to 0}\Delta_Q a_\eps(u) = \Delta_Q |u|$ in $\mathscr{D}(\R^d)$.

Since $\nabla a_\eps(u) = \frac{1}{a_\eps(u)+\eps}\sum_{j=1}^m u_j \nabla u_j$, it follows that
for $ \varphi \in C_c^\infty(\R^d)$ we have
\begin{align*}
\langle \Delta_Q a_\eps(u), \varphi\rangle
  &=  -\int_{\R^d} \langle Q\nabla a_\eps(u), \nabla\varphi\rangle\, dx
= - \sum_{j=1}^m \int_{\R^d} \frac{u_j}{a_\eps(u)+\eps}\langle Q\nabla u_j, \nabla \varphi\rangle\, dx\\
 & = - \sum_{j=1}^m \int_{\R^d} \big\langle Q\nabla u_j, \nabla ((a_\eps(u)+\eps)^{-1}u_j \varphi )\big\rangle\, dx\\
 &\qquad
+ \sum_{j=1}^m \int_{\R^d} \big\langle Q\nabla u_j, \nabla ((a_\eps(u)+\eps)^{-1}u_j)\big\rangle\varphi \, dx\\
 &= \sum_{j=1}^m\int_{\R^d} \frac{u_j}{a_\eps(u)+\eps}\Delta_Q u_j \varphi\, dx
+ \sum_{j=1}^m \int_{\R^d} \frac{1}{a_\eps(u)+\eps} \langle  Q\nabla u_j ,\nabla u_j \rangle \varphi \, dx\\
&\qquad
- \sum_{j=1}^m \int_{\R^d} \frac{u_j}{(a_\eps(u)+\eps)^2} \langle Q \nabla u_j ,\nabla a_\eps(u) \rangle\varphi \, dx\\
 &= \sum_{j=1}^m\int_{\R^d} \frac{u_j}{a_\eps(u)+\eps}\Delta_Q u_j \varphi\, dx
+ \sum_{j=1}^m \int_{\R^d} \frac{1}{a_\eps(u)+\eps} \langle  Q\nabla u_j ,\nabla u_j \rangle \varphi \, dx\\
& \qquad
- \int_{\R^d} \frac{1}{a_\eps(u)+\eps}\langle Q\nabla a_\eps(u), \nabla a_\eps(u) \rangle\varphi\, dx.
\end{align*}
Recall that $\lim_{\eps\to 0}a_\eps(u) = |u|$ in $L^2_\loc(\R^d)$ and
$\lim_{\eps\to 0}\nabla a_\eps(u) =\nabla |u|$ in $L^2_\loc (\R^d,\R^d)$. Noting that $\frac{u_j}{a_\eps(u)+\eps}$ is uniformly bounded by 1  we see that we can apply the  dominated convergence theorem in the first integral above. For the other two integrals, we apply the monotone convergence theorem, using that $Q$ is strictly elliptic and observing
that  $(a_\eps(u)+\eps)^{-1}$ decreases to  $|u|^{-1}$. Note that in all integrals it is sufficient to integrate over the set $\{u\neq 0\}$. For the first
and third integral, this is obvious due to the presence of $u_j$ which vanish on $\{u=0\}$. For the second one we infer from Stampacchia's lemma that $\nabla u_j = 0$ on $\{u=0\}$.
Thus, by letting $\eps \to 0$, we obtain
\[
\langle\Delta_Q|u|, \varphi \rangle = \int_{\{u\neq 0\}} \sum_{j=1}^m  \Big( \frac{u_j}{|u|}\Delta_Q u_j
+ \frac{1}{u}|\nabla u_j|_Q^2\Big) \varphi \, dx - \int_{\{u\neq 0\}} \frac{1}{|u|} \big|\nabla |u|\big|^2_Q\varphi \, dx.
\]

This proves\eqref{eq.DeltaQ}. Using \eqref{gradient |.| <...} and \eqref{eq.DeltaQ}, also \eqref{eq.kato} follows.
\end{proof}

\section{Generation of semigroup in \texorpdfstring{$L^2(\R^d,\C^m)$}{}}\label{sec. gen of sg in L2}

Let us consider the differential operator $\mathcal{A}u=\Delta_Q u-Vu$, where $u=(u_1,\dots,u_m)$. Here $\Delta_Q$
acts entrywise on $u$, i.e.\ $\Delta_Q u = (\Delta_Q u_1, \ldots, \Delta_Q u_m)$.
 We define $A$ to be the realization of $\mathcal{A}$ on $L^2(\R^d,\C^m)$ with domain
\[ D(A)=\{u\in H^1(\R^d,\C^m) : \mathcal{A}u\in L^2(\R^d,\C^m)\}.\]
In this section we prove that $A$ generates a $C_0$-semigroup of contractions in $L^2(\R^d,\C^m)$.
In view of the Lumer--Phillips theorem, cf. \cite[Theorem II-3.15]{en00}, it suffices to prove that $-A$ is maximal accretive, i.e.\ for $u\in D(A)$
we have $\langle -Au, u\rangle \geq 0$ and $I-A$ is surjective.

To that end, we follow the strategy from \cite{Kato78} and introduce some other realizations of the operator $\mathcal{A}$
on the space $H^{-1}(\R^d; \C^m)$, the dual space of $H^1(\R^d; \C^m)$.
We define the operator $L_0$ by setting
\[
L_0 u=\mathcal{A}u,\qquad u\in D(L_0)\coloneqq C_c^\infty(\R^d;\C^m)
\]
and the operator $L$ by
\[
Lu=\mathcal{A}u,\qquad u\in D(L)\coloneqq \{u\in H^1(\R^d;\C^m) : \mathcal{A}u\in H^{-1}(\R^d;\C^m)\}.\]

We let $\mathcal{\bar{A}}u=\Delta_Q-V^* $  be the formal adjoint of $\mathcal
{A}$, where $V^*$ is the conjugate matrix of $V$. We then define the operators $\bar{L}$ and $\bar{L}_0$ in analogy to
the operators $L$ and $L_0$, using the potential $V^*$ instead of $V$.

We now collect some properties of the operators $L_0$ and $L$ and the adjoints $L_0^*$.
We denote the duality pairing between  $H^{-1}(\R^d;\C^m)$ and  $H^1(\R^d;\C^m)$ by $[\cdot ,\cdot]$.

\begin{prop}\label{adjointe of L_0}
Assume Hypotheses \ref{Hyp}. Then the following hold
\begin{enumerate}
\item[(1)] $\bar{L}=L_0^*$ and $L=\bar{L}_0^*$. Consequently $\bar{L}$ and $L$ are closed.
\item[(2)] $L_0$ is closable and its closure is equal to $L_0^{**}$.
\end{enumerate}
\end{prop}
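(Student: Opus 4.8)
The plan is to identify $L_0^*$ and $\bar L_0^*$ directly from the definition of the Hilbert space adjoint, using the duality pairing $[\cdot,\cdot]$ between $H^{-1}$ and $H^1$, and then to derive closedness and the statement about $L_0^{**}$ from general operator theory. First I would compute, for $u,\varphi\in C_c^\infty(\R^d;\C^m)$, the quantity $[L_0 u,\varphi]$. Since for test functions $\Delta_Q$ acts as the genuine differential operator, an integration by parts gives $[\Delta_Q u,\varphi] = -\int\langle\nabla u,\nabla\varphi\rangle_Q\,dx = [u,\Delta_Q\varphi]$ (the form is symmetric because $Q$ is symmetric), and $[Vu,\varphi]=\int\langle Vu,\varphi\rangle\,dx = \int\langle u, V^*\varphi\rangle\,dx = [u, V^*\varphi]$. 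Hence $[L_0 u,\varphi]=[u,\bar{\mathcal A}\varphi]$ for all such $u,\varphi$. This shows $\bar L \subseteq L_0^*$: indeed if $\varphi\in D(\bar L)=\{v\in H^1: \bar{\mathcal A}v\in H^{-1}\}$, then $u\mapsto [L_0u,\varphi]$ extends to the bounded functional $u\mapsto[u,\bar{\mathcal A}\varphi]$ on $H^1\supseteq D(L_0)$, so $\varphi\in D(L_0^*)$ with $L_0^*\varphi=\bar{\mathcal A}\varphi=\bar L\varphi$.

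For the reverse inclusion $L_0^*\subseteq\bar L$, suppose $\varphi\in D(L_0^*)$, so there is $w=L_0^*\varphi\in H^{-1}(\R^d;\C^m)$ with $[L_0 u,\varphi]=[u,w]$ for all $u\in C_c^\infty(\R^d;\C^m)$. Testing against scalar-valued $\psi\in C_c^\infty(\R^d)$ placed in each component, the left-hand side unravels (via the same integration by parts as above, now read in the distributional sense since $\Delta_Q\varphi_j$ and $V^*\varphi$ are only distributions a priori) to the distributional action of $\bar{\mathcal A}\varphi$ on $\psi$; thus $\bar{\mathcal A}\varphi = w$ as distributions, and since $w\in H^{-1}$ this means $\bar{\mathcal A}\varphi\in H^{-1}$, i.e. $\varphi\in D(\bar L)$ and $\bar L\varphi=w=L_0^*\varphi$. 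This proves $\bar L=L_0^*$. The identity $L=\bar L_0^*$ follows by the same argument with the roles of $V$ and $V^*$ exchanged (noting $V^{**}=V$). Since adjoints of densely defined operators are always closed, $\bar L=L_0^*$ and $L=\bar L_0^*$ are closed, giving (1).

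For (2): $L_0$ is densely defined (its domain $C_c^\infty(\R^d;\C^m)$ is dense in $L^2$), hence $L_0^*$ is densely defined as well — here I would use part (1), since $D(L_0^*)=D(\bar L)\supseteq C_c^\infty(\R^d;\C^m)$, which is dense. A densely defined operator with densely defined adjoint is closable, and its closure equals $L_0^{**}$; this is the standard fact $\overline{L_0}=L_0^{**}$ (e.g. \cite[Theorem III.5.29]{Kato} or any functional-analysis reference), which I would simply invoke.

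The only genuinely delicate point is the distributional integration-by-parts identity underlying both inclusions: one must check that for $\varphi\in H^1(\R^d;\C^m)$ with $\bar{\mathcal A}\varphi\in H^{-1}$, the pairing $[\bar{\mathcal A}\varphi, u]$ for $u\in C_c^\infty$ genuinely coincides with $-\int\langle\nabla\varphi,\nabla u\rangle_Q\,dx - \int\langle V^*\varphi, u\rangle\,dx$, i.e. that the $H^{-1}$–$H^1$ pairing restricted to test functions agrees with the distributional action. This is a matter of carefully tracking definitions (the pairing $[\cdot,\cdot]$ restricted to $C_c^\infty$ is the distributional pairing, and $\Delta_Q$ was \emph{defined} distributionally in \eqref{eq.deltaq}), and the local boundedness of $V$ from Hypotheses \ref{Hyp} ensures $V^*\varphi\in L^1_\loc$ so that the relevant integrals make sense against compactly supported test functions. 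No growth condition is needed here because everything takes place against compactly supported $u$.
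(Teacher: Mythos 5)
Your argument is correct and follows essentially the same route as the paper: integration by parts to identify $[\,L_0 u,\varphi\,]$ with the action of $\bar{\mathcal A}\varphi$, giving $\bar L=L_0^*$ and $L=\bar L_0^*$, then closedness of adjoints and the standard fact $\overline{L_0}=L_0^{**}$ for part (2). If anything you are more complete than the paper, which only displays the computation proving $\bar L\subseteq L_0^*$ and leaves the reverse inclusion (your distributional identification of $L_0^*\varphi$ with $\bar{\mathcal A}\varphi$) implicit.
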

\begin{proof}
(1) Let $f\in D(\bar{L})$ and $g\in C_c^\infty(\R^d,\C^m)$. Using integration by parts, we see that
\begin{align*}
[\bar{L}f,g]&=\int_{\R^d}\langle \div (Q\nabla f)(x),g(x)\rangle dx-\int_{\R^d}\langle V^*(x)f(x),g(x)\rangle dx\\
&= \int_{\R^d}\langle f(x),\div(Q\nabla g)(x)\rangle dx-\int_{\R^d}\langle f(x),V(x)g(x)\rangle dx\\
&=[L_0 g, f]
\end{align*}
Thus $\bar{L}=L_0^*$ and hence $\bar{L}$ is closed. In a similar way one shows that $L=\bar{L}_0^*$ and thus $L$ is also closed.\\
(2) Since $L_0^*$ is densely defined, $L_0$ is closable with closure $L_0^{**}$ by general theory.
\end{proof}

We can now prove the main result of this section.
\begin{thm}
 The operator $-L$ is maximal monotone.
\end{thm}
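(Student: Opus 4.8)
The plan is to establish the two defining properties of maximal monotonicity for $-L$: monotonicity (accretivity) and maximality, the latter being equivalent to surjectivity of $I - L$ (equivalently, $I + (-L)$) as a map from $D(L)$ onto $H^{-1}(\R^d;\C^m)$.

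For monotonicity, I would take $u \in D(L) = \{u \in H^1(\R^d;\C^m) : \mathcal{A}u \in H^{-1}(\R^d;\C^m)\}$ and compute $[-Lu, u] = -[\Delta_Q u, u] + [Vu, u]$. The first term, by the very definition \eqref{eq.deltaq} of $\Delta_Q$ extended by density to $H^1$, equals $\int_{\R^d} \langle \nabla u, \nabla u\rangle_Q\, dx \geq 0$ by ellipticity \eqref{ellip of Q} (applied componentwise). The second term equals $\int_{\R^d} \langle V(x)u(x), u(x)\rangle\, dx$, whose real part is nonnegative by the pointwise accretivity assumption \eqref{eq.diss}; here one must be slightly careful that $Vu \in L^1_{\loc}$ pairs against $u \in H^1$ correctly, but since $[-Lu,u]$ is real (being equal to a sum of real quantities via the identity just derived, or because $-L$ is associated with a sesquilinear form), $\Re[-Lu,u] = [-Lu,u] \geq 0$. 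In fact the cleanest route is to introduce the sesquilinear form $\mathfrak{a}(u,v) = \int \langle \nabla u, \nabla v\rangle_Q + \int \langle Vu, v\rangle$ on $H^1$, note it is continuous and accretive, and observe $[-Lu, v] = \mathfrak{a}(u,v)$ for all $v \in H^1$ when $u \in D(L)$; monotonicity of $-L$ is then immediate.

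For maximality, I need to solve $u - \mathcal{A}u = f$ in $H^{-1}$ for arbitrary $f \in H^{-1}(\R^d;\C^m)$, i.e. find $u \in H^1$ with $\mathfrak{a}(u,v) + \langle u, v\rangle_{L^2} = [f, v]$ for all $v \in H^1$. The bilinear form $\mathfrak{b}(u,v) := \langle u,v\rangle_{L^2} + \mathfrak{a}(u,v)$ is continuous on $H^1 \times H^1$ (using boundedness of $Q$; note $V$ is only locally bounded, so continuity of the $V$-term on all of $H^1$ is not automatic — this is the first subtlety) and coercive: $\Re\mathfrak{b}(u,u) \geq \|u\|_{L^2}^2 + \eta_1\|\nabla u\|_{L^2}^2 \geq \min(1,\eta_1)\|u\|_{H^1}^2$. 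If $\mathfrak{b}$ were continuous, Lax--Milgram would immediately give a unique $u \in H^1$ with $\mathfrak{b}(u,v) = [f,v]$, and then $\mathcal{A}u = u - f \in H^{-1}$ shows $u \in D(L)$ and $u - Lu = f$, proving surjectivity.

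The main obstacle is precisely the continuity of the form in the presence of the merely locally bounded, unbounded potential $V$: $v \mapsto \int \langle Vu, v\rangle$ need not be $H^1$-bounded, so Lax--Milgram does not apply verbatim. I expect the paper handles this by truncation: replace $V$ by $V_n := V\, \chi_{B(n)}$ (or $V_n := V(1 + \frac{1}{n}|V|)^{-1}$ to also cap the size), which is bounded, apply Lax--Milgram to get solutions $u_n \in H^1$ of $u_n - \Delta_Q u_n + V_n u_n = f$, derive uniform bounds from coercivity — testing with $u_n$ gives $\|u_n\|_{L^2}^2 + \eta_1\|\nabla u_n\|_{L^2}^2 + \Re\int\langle V_n u_n, u_n\rangle \leq |[f,u_n]| \leq \|f\|_{H^{-1}}\|u_n\|_{H^1}$, and since $\Re\int\langle V_n u_n,u_n\rangle \geq 0$ by \eqref{eq.diss} this bounds $\|u_n\|_{H^1}$ uniformly and also bounds $\int\langle V_n u_n, u_n\rangle$ — then extract a weakly convergent subsequence $u_n \rightharpoonup u$ in $H^1$, pass to the limit in the weak formulation (using local strong convergence in $L^2_{\loc}$ via Rellich on balls, together with $V \in L^\infty_{\loc}$, to handle the potential term against test functions), and check $u$ solves the equation and lies in $D(L)$. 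Kato's inequality (Proposition \ref{p.kato}), together with the accretivity of $V$, may be invoked to control the potential term or to justify the limit, and it is the tool that makes the whole Kato-style argument go through. Uniqueness of $u$ follows from coercivity applied to the difference of two solutions.
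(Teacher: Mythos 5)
Your overall strategy (monotonicity plus surjectivity of $1-L$) is sound and, modulo details, it works; but it is genuinely different from the paper's route. The paper follows Kato's original scheme: it introduces the minimal operator $L_0$ on $C_c^\infty(\R^d;\C^m)$, where monotonicity of $-L_0$ is a one-line computation, and proves that the closure $-L_0^{**}$ is maximal by showing that $\rg(1-L_0)$ is dense in $H^{-1}(\R^d;\C^m)$. That density argument is exactly where the vector-valued Kato inequality of Proposition \ref{p.kato} enters: if $u\in H^1$ annihilates $(1-L_0)C_c^\infty$, then $u-\Delta_Qu+V^*u=0$, Kato's inequality yields $\Delta_Q|u|\ge|u|$, and testing against nonnegative approximations of $|u|$ forces $u=0$. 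Finally $L=L_0^{**}$ is obtained by a duality argument ($\ker(1-L)=\rg(1-L^*)^\perp$, with Step 1 applied to $V^*$). Your variational route — truncate $V$, solve by Lax--Milgram, extract a weak $H^1$-limit using the uniform coercivity bound, and pass to the limit against test functions — does establish surjectivity of $1-L$ and entirely bypasses Kato's inequality; your closing remark that Kato's inequality is ``the tool that makes the whole argument go through'' is therefore accurate for the paper's proof but not for yours. What the paper's approach buys, beyond the theorem itself, is the identity $L=L_0^{**}$, i.e.\ that test functions form a core for $L$; this is reused later (Proposition \ref{C_c is a core for A} and the Trotter--Kato formula). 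Your approach does not give this.

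One point in your argument needs repair. You cannot define $\mathfrak{a}(u,v)=\int\langle\nabla u,\nabla v\rangle_Q+\int\langle Vu,v\rangle$ on all of $H^1\times H^1$ (as you yourself note, the potential term need not converge), so the asserted identity $[-Lu,u]=\mathfrak{a}(u,u)$ for $u\in D(L)$ is not available ``for free'': for such $u$ one only knows $\mathcal{A}u\in H^{-1}$, not that $\int\langle Vu,u\rangle$ is finite. To prove $\Re[-Lu,u]\ge 0$ directly you should pair $-\mathcal{A}u$ against $\zeta_n^2u$ for cutoffs $\zeta_n$, use that $\Re\langle Vu,u\rangle\ge0$ pointwise and monotone convergence for the potential term, and let $n\to\infty$ using $\zeta_n^2u\to u$ in $H^1$ and $\|\nabla\zeta_n\|_\infty\to0$. (In the paper this issue never arises: monotonicity of $-L$ is inherited from $-L_0$ once $L=L_0^{**}$ is known.) With that fix your proof is complete.
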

\begin{proof}
 \emph{Step 1:}  We first show that $-L_0^{**}$ is maximal monotone.
 It is easy to see that $-L_0$ is monotone. Indeed, for $\varphi \in C_c^\infty (\R^d;\R^m)$ we have
 \[\Re[-L_0\varphi,\varphi]=\int_{\R^d}|\nabla\varphi(x)|_{Q(x)}^2 dx+\Re\int_{\R^d}\langle V(x)\varphi(x),\varphi(x)\rangle dx\ge 0.\]
It follows that also the closure of $-L_0$, i.e.\ the operator $-L_0^{**}$ is monotone.
As $-L_0^{**}$ is monotone, $\rg(1-L^{**}_0)$, the range of $(1-L^{**}_0)$, is a closed subset of $H^{-1}(\R^d;\C^m)$. Therefor, to prove that $-L_0^{**}$ is maximal, it suffices to show that $1-L_0^**$ has dense range. We prove that$(1-L^{**}_0)C_c^\infty(\R^d,\C^m)$ is dense in $H^{-1}(\R^d;\C^m)$.
Since the coefficients of $\mathcal{A}$ are real, it suffices to prove that $(1-L^{**}_0)C_c^\infty(\R^d;\R^m)$ is dense in $H^{-1}(\R^d;\R^m)$. To that end, let $u\in H^1(\R^d;\R^m)$ be such that $[(1-L_0)\varphi,u]=0$ for all $\varphi\in C_c^\infty(\R^d;\R^m)$. Then
\begin{equation}\label{eq.resol-elliptic}
u-\Delta_Q u+V^*u=0,
\end{equation}
and hence,
\[\Delta_Q u_j=u_j+\sum_{l=1}^{m}v_{lj}u_l,\]
for every $j\in\{1,\dots,m\}$ in the sense of distributions. Applying \eqref{eq.kato}, we obtain
\begin{align*}
\Delta_Q|u|&\ge \frac{1}{|u|}\sum_{j=1}^{m}u_j\Delta_Q u_j\chi_{\{u\neq 0\}}\\
&\ge \frac{\chi_{\{u\neq 0\}}}{|u|}(\sum_{j=1}^{m}u_j^2+\sum_{j,l=1}^{m}v_{lj}u_l u_j)\\
&\ge \frac{\chi_{\{u\neq 0\}}}{|u|}|u|^2=|u|.
\end{align*}
Thus, $\Delta |u|\ge |u|$ in the sense of distributions. Now, let $(\phi_n)_n\subset C_c^\infty(\R^d)$ be such that $\phi_n\ge 0$ and $\phi_n\to|u|$ in $H^1(\R^d)$. Then
\begin{align*}
0\le [\Delta_Q|u|,\phi_n]-[|u|,\phi_n]=-\int_{\R^d}\langle\nabla|u|,\nabla\phi_n\rangle_Q dx-\int_{\R^d}|u|\phi_n \,dx.
\end{align*}
Upon $n\to\infty$, we find $-\||\nabla|u||_Q\|_2^2-\|u\|^2_2\ge 0$ which implies that $u=0$. This proves that the range of $I-L_0^{**}$ is dense.\\

\emph{Step 2:} We now prove now that $L=L_0^{**}$. We know that $L$ is a closed extension of $L_0$. Hence $L_0^{**}\subset L$. In order to get the converse inclusion, it suffices to show that $\rho(L)\cap\rho(L_0^{**})\neq\emptyset$. As $L_0^{**}$ is maximal monotone we have $1\in\rho(L_0^{**})$. On the other hand, $(1-L)D(L)\supset(1-L_0^{**})D(L_0^{**})=H^{-1}(\R^d;\C^m)$. Hence, $1-L$ is surjective. To prove that $1-L$ is injective, note that $\ker (1-L)=\rg(1-L^*)^\perp$. Applying \emph{Step 1} with $V^*$ instead of $V$ we see that $-L^*=-\bar{L}_0^{**}$ is maximal. Repeating the above argument, it follows that $\rg (1-L^*) = H^{-1}(\R^d; \C^m)$ and thus $\ker (1-L) = \{0\}$.
This proves that $1\in\rho(L)$ and ends the proof.
\end{proof}

We can now infer that $A$ generates a strongly continuous contraction semigroup.

\begin{coro}
Assume Hypotheses \ref{Hyp}. Then
the operator $A$ generates a $C_0$-semigroup $\{T(t)\}_{t\ge0}$ of contractions on $L^2(\R^d,\C^m)$.
\end{coro}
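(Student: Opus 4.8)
The plan is to derive the Corollary directly from the theorem that $-L$ is maximal monotone together with the Lumer--Phillips theorem. First I would observe that the realization $A$ defined on $D(A) = \{u \in H^1(\R^d;\C^m) : \mathcal{A}u \in L^2(\R^d;\C^m)\}$ is precisely the part of $L$ in $L^2(\R^d;\C^m)$; indeed, since $L^2(\R^d;\C^m) \hookrightarrow H^{-1}(\R^d;\C^m)$ continuously and densely, a function $u \in H^1(\R^d;\C^m)$ satisfies $\mathcal{A}u \in L^2$ if and only if $u \in D(L)$ and $Lu \in L^2$, and in that case $Au = Lu$. Hence $A = L|_{L^2}$ in the usual sense of the part of an operator in a subspace.

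Next I would check that $-A$ is accretive (monotone) on $L^2$. For $u \in D(A) \subset D(L)$ one has $\Re\langle -Au, u\rangle_{L^2} = \Re[-Lu, u]$, where the right-hand side is the $H^{-1}$--$H^1$ duality pairing, which coincides with the $L^2$ inner product when $-Au \in L^2$. Since $-L$ is monotone, this is $\geq 0$; more concretely, approximating and using Hypotheses \ref{Hyp}(1)--(2) one has $\Re[-Lu,u] = \int_{\R^d} |\nabla u|_Q^2\,dx + \Re\int_{\R^d}\langle Vu, u\rangle\,dx \geq 0$ by ellipticity of $Q$ and pointwise accretivity of $V$.

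Then I would verify the range condition: $I - A$ is surjective onto $L^2(\R^d;\C^m)$. Given $f \in L^2(\R^d;\C^m) \subset H^{-1}(\R^d;\C^m)$, since $1 \in \rho(L)$ (established in the proof that $-L$ is maximal monotone, via $L = L_0^{**}$) there is a unique $u \in D(L)$ with $u - Lu = f$. Because $f \in L^2$, we get $\mathcal{A}u = Lu = u - f \in L^2$, so $u \in D(A)$ and $(I-A)u = f$. Injectivity of $I - A$ is inherited from injectivity of $I - L$. Therefore $\rg(I - A) = L^2(\R^d;\C^m)$, and since $D(A) \supset C_c^\infty(\R^d;\C^m)$ is dense, $-A$ is densely defined, accretive and $I-A$ is surjective, i.e.\ $-A$ is $m$-accretive. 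By the Lumer--Phillips theorem \cite[Theorem II-3.15]{en00}, $A$ generates a $C_0$-semigroup of contractions $\{T(t)\}_{t\geq 0}$ on $L^2(\R^d;\C^m)$.

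The only genuinely delicate point is the identification $A = L|_{L^2}$ together with the transfer of the range condition from $H^{-1}$ to $L^2$ — one must make sure that the resolvent of $L$ at $1$ maps $L^2$ back into $L^2$, which is immediate here from the equation $u = f + Lu$ once $Lu$ is known to lie in $L^2$, and that the two pairings agree on the overlap, which is the standard compatibility of the $L^2$ inner product with the $H^{-1}$--$H^1$ duality. Everything else is a routine application of Lumer--Phillips.
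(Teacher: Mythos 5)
Your proposal is correct and follows exactly the paper's argument: identify $A$ as the part of $L$ in $L^2(\R^d;\C^m)$, inherit monotonicity of $-A$ from $-L$, transfer surjectivity of $I-L$ to $I-A$ via the observation that $\mathcal{A}u = u - f \in L^2(\R^d;\C^m)$ whenever $f\in L^2(\R^d;\C^m)$, and conclude with the Lumer--Phillips theorem. The extra care you take with the compatibility of the $H^{-1}$--$H^1$ pairing with the $L^2$ inner product and with density of $D(A)$ is left implicit in the paper but is exactly the right bookkeeping.
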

\begin{proof}
Since $-L$ is monotone, so is $-A$, the part of $-L$ in $L^2(\R^d;\C^m)$. As $-L$ is \emph{maximal} monotone, so
is $-A$. Indeed, given $f \in L^2(\R^d;\C^m) \subset H^{-1}(\R^d;\C^m)$ we find $u\in D(L)$ such that $u-\mathcal{A}u = f$. But then
$\mathcal{A} u = u- f$ belongs to $L^2(\R^d;\C^m)$, proving that $u\in D(A)$ and $u-Au = f$. The claim now follows from the Lumer--Phillips theorem.
\end{proof}

\section{Extension of the semigroup to \texorpdfstring{$L^p(\R^d,\C^m)$}{}}\label{Extension to Lp}

In this section we extrapolate the semigroup $\{T(t)\}_{t\ge0}$ to the spaces $L^p(\R^d,\C^m)$, $1\le p<\infty$. As a first step, we prove that $\{T(t)\}_{t\ge0}$ is given by the Trotter--Kato product formula
\begin{equation}\label{Trotter-Kato}
T(t)f=\lim_{n\to\infty}\left[e^{\frac{t}{n}\Delta_Q}e^{-\frac{t}{n}V}\right]^n f,
\end{equation}
for all $t>0$ and $f\in L^2(\R^d,\C^m)$. Here $\{e^{t\Delta_Q}\}_{t\ge0}$ is the semigroup generated by $\Delta_Q$ in $L^2(\R^d,\C^m)$ and $\{e^{-tV}\}_{t\geq 0}$ is the multiplication semigroup generated by the potential $-V$, i.e.\
$e^{-tV}$ is multiplication with the matrix given pointwise by $\sum_{k=0}^\infty \frac{(-tV(x))^k}{k!}$.
To prove that the semigroup $\{T(t)\}_{t\geq 0}$ is given by the Trotter--Kato formula \eqref{Trotter-Kato} we use the following result which is also of independent interest.

\begin{prop}\label{C_c is a core for A}
Assume Hypotheses \ref{Hyp}. Then $C_c^\infty(\R^d; \C^m)$ is a core for $A$.
\end{prop}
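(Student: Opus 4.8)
The goal is to show $C_c^\infty(\R^d;\C^m)$ is a core for $A$, i.e.\ that $A$ is the closure of its restriction $A_0 := A|_{C_c^\infty}$. Since $A$ generates a contraction semigroup, $1 \in \rho(A)$, so by standard semigroup theory it suffices to prove that $(1-A_0)C_c^\infty(\R^d;\C^m)$ is dense in $L^2(\R^d;\C^m)$. Equivalently, if $g \in L^2(\R^d;\C^m)$ satisfies $\langle (1-\mathcal{A})\varphi, g\rangle = 0$ for all $\varphi \in C_c^\infty(\R^d;\C^m)$, then $g = 0$. Writing this out, $g \in L^2$ is a distributional solution of $g - \Delta_Q g + V^* g = 0$ (the formal adjoint equation), and we must show $g = 0$.

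The plan is to run essentially the same argument as Step 1 of the preceding theorem, but now in the $L^2$-setting rather than $H^{-1}$, which is in fact easier. Since the coefficients of $\bar{\mathcal A}$ share the structural Hypotheses~\ref{Hyp} (with $V^*$ in place of $V$, which is still pointwise accretive since $\Re\langle V^*\xi,\xi\rangle = \Re\langle V\xi,\xi\rangle \ge 0$), we may argue directly. From $g - \Delta_Q g + V^* g = 0$ we read off, componentwise in the sense of distributions,
\[
\Delta_Q g_j = g_j + \sum_{l=1}^m \overline{v_{jl}}\, g_l, \qquad j = 1,\dots,m.
\]
First I would check that $g \in H^1_{\loc}(\R^d;\C^m)$ with $\Delta_Q g_j \in L^1_{\loc}(\R^d)$, so that Kato's inequality (Proposition~\ref{p.kato}) applies: indeed $g \in L^2 \subset L^1_{\loc}$ and, since $V^* g \in L^1_{\loc}$ (as $v_{ij} \in L^\infty_{\loc}$), local elliptic regularity for $\Delta_Q$ gives $g \in H^1_{\loc}$ and then the displayed identity shows $\Delta_Q g_j \in L^1_{\loc}$. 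Then Kato's inequality yields
\[
\Delta_Q |g| \;\ge\; \chi_{\{g\neq 0\}}\frac{1}{|g|}\sum_{j=1}^m \Re\!\big(g_j \overline{\Delta_Q g_j}\big)
\;=\; \chi_{\{g\neq 0\}}\frac{1}{|g|}\Big(|g|^2 + \Re\langle V^* g, g\rangle\Big) \;\ge\; |g|
\]
pointwise, using accretivity of $V^*$. (Here I use the complex form of Kato's inequality: Proposition~\ref{p.kato} is stated for real $u$, but the complex case follows in the same way from Lemma~\ref{l.normsobolev}, or by splitting into real and imaginary parts; this minor point should be remarked.) So $\Delta_Q|g| \ge |g| \ge 0$ in the sense of distributions.

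Finally I would test this inequality against a nonnegative sequence $\phi_n \in C_c^\infty(\R^d)$, $\phi_n \ge 0$, with $\phi_n \to |g|$ in $H^1(\R^d)$ — such a sequence exists because $|g| \in H^1(\R^d)$ by Lemma~\ref{l.normsobolev} (note $g \in H^1$ is not immediate, only $H^1_{\loc}$, so one should first argue $g \in H^1$: since $g, \Delta_Q g \in L^2$ one gets $g \in H^1$ globally, e.g.\ from the weak formulation $\langle \nabla g, \nabla g\rangle_Q = -\langle \Delta_Q g, g\rangle < \infty$ via a cutoff argument, hence $\nabla g \in L^2$ by ellipticity). Pairing gives
\[
0 \le \langle \Delta_Q|g|, \phi_n\rangle - \langle |g|, \phi_n\rangle = -\int_{\R^d}\langle \nabla|g|,\nabla\phi_n\rangle_Q\,dx - \int_{\R^d}|g|\phi_n\,dx,
\]
and letting $n \to \infty$ yields $0 \le -\||\nabla|g||_Q\|_2^2 - \||g|\|_2^2$, forcing $g = 0$. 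This proves density of $(1-\mathcal{A})C_c^\infty(\R^d;\C^m)$ in $L^2$, hence $C_c^\infty(\R^d;\C^m)$ is a core for $A$. The main obstacle is not any single deep step but rather the bookkeeping of regularity: verifying that $g$ has enough local regularity ($H^1_{\loc}$ with $\Delta_Q g_j \in L^1_{\loc}$) for Kato's inequality to apply, that $|g| \in H^1(\R^d)$ globally so that the test functions $\phi_n$ exist, and that the complex-valued version of Proposition~\ref{p.kato} is legitimate — once these are in place the estimate chain is exactly as in the previous theorem.
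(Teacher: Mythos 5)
Your reduction (show that any $g\in L^2$ with $g-\Delta_Qg+V^*g=0$ vanishes), the use of local elliptic regularity, and the application of Kato's inequality to reach $\Delta_Q|g|\ge|g|$ all match the paper's proof. The divergence, and the gap, is in the final step. You pair $\Delta_Q|g|\ge|g|$ against a sequence $\phi_n\to|g|$ in $H^1(\R^d)$, which requires $|g|\in H^1(\R^d)$ \emph{globally}. That step was legitimate in the $H^{-1}$-argument of Section 3 because there $u$ was an element of $H^1(\R^d;\R^m)$ by hypothesis (it represents a functional on $H^{-1}$); here $g$ is only an $L^2$-function, and elliptic regularity gives $H^2_{\loc}$, not global $H^1$. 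Your proposed justification --- ``since $g,\Delta_Qg\in L^2$'' --- is incorrect: $\Delta_Qg=g+V^*g$ and $V^*g$ need not lie in $L^2(\R^d)$, since $V$ is only \emph{locally} bounded. The paper sidesteps this entirely: it multiplies $\Delta_Q|u|\ge|u|$ by $\zeta_n|u|$ with dilated cutoffs $\zeta_n(x)=\zeta(x/n)$, integrates by parts so that all derivatives land on $\zeta_n$, and uses $\|\Delta_Q\zeta_n\|_\infty\to0$; this needs only the local regularity that is actually available.

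The gap is repairable, and in a way that makes your detour through Kato's inequality unnecessary. Testing the equation directly with $\zeta_n^2\bar g_j$ (admissible since $g\in H^1_{\loc}$ and $\Delta_Qg_j\in L^2_{\loc}$) and summing over $j$ gives
\[
\sum_{j=1}^m\int\zeta_n^2|\nabla g_j|_Q^2\,dx+\int\zeta_n^2|g|^2\,dx+\Re\int\zeta_n^2\langle V^*g,g\rangle\,dx
=-2\Re\sum_{j=1}^m\int\zeta_n\bar g_j\langle\nabla g_j,\nabla\zeta_n\rangle_Q\,dx .
\]
Here one does not need $V^*g\in L^2$: the potential term is nonnegative by accretivity and is simply dropped. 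Absorbing the right-hand side by Young's inequality leaves $\int\zeta_n^2|g|^2\,dx\le C\|\nabla\zeta_n\|_\infty^2\|g\|_2^2\to0$, i.e.\ $g=0$ directly. So either adopt the paper's $\zeta_n|u|$-test after Kato's inequality, or replace your final step by this Caccioppoli-type estimate; as written, the passage to the limit in $\int\langle\nabla|g|,\nabla\phi_n\rangle_Q$ is not justified.
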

\begin{proof}
Since $-A$ is maximal accretive and has real coefficients, it suffices to show that $(1-A)C_c^\infty(\R^d;\R^m)$ is dense in $L^2(\R^d; \R^m)$. Let $u\in L^2(\R^d;\R^m)$ be such that $\langle(1-A)\varphi,u\rangle=0$, for all $\varphi\in C_c^\infty(\R^d;\R^m)$. Thus, $u-\Delta_Q u+V^* u=0$ in the sense of distributions. Hence,
\[\Delta_Q u_j=u_j+\sum_{l=1}^{m}v_{lj}u_l.\]
In particular, $\Delta_Q u_j=\div(Q\nabla u_j)\in L^2_{\loc}(\R^d)$ for each $j\in\{1,\dots,m\}$.  Then, by local elliptic regularity, see \cite[Theorem 7.1]{Agmon}, $u_j\in H^2_{\loc}(\R^d)$.

Therefore, $|u|=\displaystyle\lim_{\varepsilon\to 0}(|u|^2+\varepsilon^2)^{\frac{1}{2}}$ belongs to $H^2_{\loc}(\R^d)$. In particular, equation \eqref{eq.DeltaQ} still holds true, i.e.
\[ \Delta_Q |u|\ge \frac{\chi_{\{u\neq 0\}}}{|u|}\sum_{j=1}^{m}u_j\Delta_Q u_j\]
almost everywhere. Consequently,
\[\Delta_Q |u|\ge \frac{\chi_{\{u\neq 0\}}}{|u|} (|u|^2+\langle Vu,u\rangle)\ge |u|. \]
 Now, let $\zeta\in C_c^\infty(\R^d)$ be such that $\chi_{B(1)}\le\zeta\le\chi_{B(2)}$ and define $\zeta_n(x)=\zeta(x/n)$ for $x\in\R^d$ and $n\in\N$.
 We multiply both two sides of the inequality $\Delta_Q |u|\ge |u|$ by $\zeta_n |u|$ and integrate by parts. We obtain
 \begin{align*}\label{align pf prop 4.1}
 0&\ge \int_{\R^d}|u(x)|^2\zeta_n(x)dx-\int_{\R^d}\Delta_Q|u|(x)\zeta_n(x)|u(x)|dx\\
 &= \int_{\R^d}|u(x)|^2\zeta_n(x)dx+\int_{\R^d}\langle\nabla(\zeta_n |u|)(x),Q(x)\nabla|u|(x)\rangle dx\nonumber\\
 &= \int_{\R^d}|u(x)|^2\zeta_n(x)dx+\int_{\R^d}|\nabla|u|(x)|_{Q(x)}^2\zeta_n(x)dx+\int_{\R^d}\langle\nabla\zeta_n(x),\nabla|u|(x)\rangle_{Q(x)} |u| dx\nonumber\\
 &\ge \int_{\R^d}|u(x)|^2\zeta_n(x)dx+\frac{1}{2}\int_{\R^d}\langle Q(x)\nabla\zeta_n(x),\nabla|u|^2(x)\rangle dx \nonumber\\
 &= \int_{\R^d}|u(x)|^2\zeta_n(x)dx-\frac{1}{2}\int_{\R^d}\Delta_Q\zeta_n(x)|u(x)|^2dx\nonumber.
 \end{align*}
Here we have used in the fourth line that $\nabla |u|^2 = 2|u|\nabla |u|$.
A straightforward computation shows
\[
\Delta_Q\zeta_n(x) =\frac{1}{n}\sum_{i,j=1}^{m}\partial_i q_{ij}\partial_j\zeta(x/n)+\frac{1}{n^2}\sum_{i,j=1}^{m}q_{ij}\partial_{ij}\zeta(x/n).
\]
It follows that $\|\Delta_Q\zeta_n\|_\infty\to 0$ as $n\to\infty$. Hence, letting $n\to\infty$ in the above  inequality,
we obtain $\|u\|_2\le0$, and thus $u=0$. This finishes the proof.
\end{proof}

\begin{prop}
Assume Hypotheses \ref{Hyp}. Then
the semigroup $\{T(t)\}_{t\ge0}$ is given by the Trotter-Kato product formula \eqref{Trotter-Kato}.
\end{prop}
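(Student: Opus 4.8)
The plan is to deduce \eqref{Trotter-Kato} from Chernoff's product formula (see \cite[Chapter III, Section~5]{en00}), applied to the family $F(t) \coloneqq e^{t\Delta_Q}e^{-tV}$, $t \ge 0$. First I would verify the hypotheses of that theorem. Clearly $F(0) = I$, and each $F(t)$ is a contraction on $L^2(\R^d;\C^m)$: indeed, the pointwise accretivity \eqref{eq.diss} says precisely that each matrix $-V(x)$ is dissipative on $\C^m$, so $|e^{-tV(x)}| \le 1$ for all $x \in \R^d$ and $t\ge 0$, whence the multiplication operator $e^{-tV}$ is contractive; since the diffusion semigroup $\{e^{t\Delta_Q}\}$ is contractive as well, so is $F(t)$.

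The crucial step is to compute the strong derivative of $F$ at $t=0$ on the test functions. Fix $\varphi \in C_c^\infty(\R^d;\C^m)$ and split
\[
\frac{1}{t}\big(F(t)\varphi - \varphi\big) = e^{t\Delta_Q}\,\frac{1}{t}\big(e^{-tV}\varphi - \varphi\big) + \frac{1}{t}\big(e^{t\Delta_Q}\varphi - \varphi\big).
\]
Since $\varphi \in D(\Delta_Q)$, the second summand converges to $\Delta_Q\varphi$ in $L^2(\R^d;\C^m)$ as $t \downarrow 0$. For the first summand, one uses that $V$ is bounded on the compact set $\supp\varphi$, so that a Taylor expansion of $s \mapsto e^{-sV(x)}$ (with the remainder controlled uniformly on $\supp\varphi$) yields $\tfrac1t(e^{-tV}\varphi - \varphi) \to -V\varphi$ in $L^2(\R^d;\C^m)$; composing with $e^{t\Delta_Q}$ and using that $\{e^{t\Delta_Q}\}$ is a bounded $C_0$-semigroup, one gets $e^{t\Delta_Q}\tfrac1t(e^{-tV}\varphi-\varphi) \to -V\varphi$ as well. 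Hence $\lim_{t\downarrow 0}\tfrac1t(F(t)\varphi - \varphi) = \Delta_Q\varphi - V\varphi = A\varphi$ for every $\varphi \in C_c^\infty(\R^d;\C^m)$.

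By Proposition \ref{C_c is a core for A}, $C_c^\infty(\R^d;\C^m)$ is a core for $A$, so the closure of the operator $\varphi \mapsto A\varphi$ with domain $C_c^\infty(\R^d;\C^m)$ is precisely $A$, which generates $\{T(t)\}_{t\ge 0}$. Chernoff's theorem then gives $F(t/n)^n f \to T(t)f$ for every $f \in L^2(\R^d;\C^m)$, uniformly for $t$ in compact intervals, which is \eqref{Trotter-Kato}. (Alternatively, one may invoke the Trotter product formula for the pair $\Delta_Q$, $-V$ of contraction-semigroup generators, noting that $\Delta_Q - V$ with domain $C_c^\infty(\R^d;\C^m)$ is closable with closure $A$.) I expect the only genuine obstacle to be the handling of the potential term: one must ensure that $\tfrac1t(e^{-tV}\varphi-\varphi) \to -V\varphi$ in $L^2$ and that this limit survives composition with $e^{t\Delta_Q}$. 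This is routine once one exploits that $\varphi$ has compact support and $V \in L^\infty_\loc(\R^d)$ — in particular no growth restriction on $V$ is needed for this argument.
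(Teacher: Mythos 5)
Your argument is correct and follows essentially the same route as the paper: the paper simply notes that $C_c^\infty(\R^d;\C^m)\subset D(\Delta_Q)\cap D(V)$ and invokes the Trotter product formula \cite[Corollary III-5.8]{Eng-Nag}, whose hypotheses (contractivity of both factors, density of the domain, and the range condition supplied by Proposition \ref{C_c is a core for A}) are exactly what you verify by hand via Chernoff's theorem. Your parenthetical alternative is literally the paper's proof, so the only difference is that you unpack the derivative computation for $F(t)=e^{t\Delta_Q}e^{-tV}$ explicitly, which is done correctly.
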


\begin{proof}
Since $C_c^\infty(\R^d;\C^m)\subset D(\Delta_Q)\cap D(V)$, where $D(\Delta_Q)=H^2(\R^d;\C^m)$ and $D(V)=\{u\in L^2(\R^d;\C^m) : Vu\in L^2(\R^d;\C^m)\}$, the claim follows from \cite[Corollary III-5.8]{Eng-Nag}.
\end{proof}

We can now extend $\{T(t)\}_{t\ge0}$ to $L^p(\R^d,\C^m)$.

\begin{thm}\label{generation-Lp}
Let $1<p<\infty$ and assume Hypotheses \ref{Hyp}. Then $\{T(t)\}_{t\ge0}$ can be extrapolated to a $C_0$-semigroup $\{T_p(t)\}_{t\ge0}$ on $L^p(\R^d,\C^m)$. Moreover, if we denote by $(A_p,D(A_p))$ its generator, then
$A_p u=\mathcal{A}u$, for all $u\in C_c^\infty(\R^d;\C^m)$.
\end{thm}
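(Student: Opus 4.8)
The plan is to obtain $\{T_p(t)\}_{t\ge 0}$ as the $L^p$-extension of $\{T(t)\}_{t\ge 0}$ through the Trotter--Kato product formula \eqref{Trotter-Kato}, exploiting that both building blocks are $L^p$-contractive. \emph{Step 1 (contractivity of the factors).} For each $t\ge 0$ the scalar semigroup $\{e^{t\Delta_Q}\}$ is sub-Markovian, since $u\mapsto\int_{\R^d}|\nabla u|_Q^2\,dx$ is a Dirichlet form on $H^1(\R^d)$; hence it is positivity preserving and $L^\infty$-contractive, and therefore a contraction on every $L^r(\R^d)$, $1\le r<\infty$. Applied componentwise, positivity gives $|e^{t\Delta_Q}u|\le e^{t\Delta_Q}|u|$ pointwise, so $e^{t\Delta_Q}$ is a contraction on $L^p(\R^d;\C^m)$. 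On the other hand, accretivity of $V(x)$ yields $\frac{d}{ds}|e^{-sV(x)}\xi|^2=-2\Re\la V(x)e^{-sV(x)}\xi,e^{-sV(x)}\xi\ra\le 0$, so $e^{-tV(x)}$ is a contraction of $\C^m$ for a.e.\ $x$, whence multiplication by $e^{-tV}$ is a contraction on $L^p(\R^d;\C^m)$ for every $p$. Consequently $S_n(t):=\big[e^{\frac tn\Delta_Q}e^{-\frac tn V}\big]^n$ is an $L^p$-contraction for all $n\in\N$, $t\ge 0$.

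\emph{Step 2 (extrapolation).} Let $f\in L^2(\R^d;\C^m)\cap L^p(\R^d;\C^m)$. By the preceding proposition $S_n(t)f\to T(t)f$ in $L^2$, so along a subsequence also a.e.; since $\|S_n(t)f\|_p\le\|f\|_p$, Fatou's lemma gives $\|T(t)f\|_p\le\|f\|_p$. As $L^2\cap L^p$ is dense in $L^p$, $T(t)$ extends uniquely to a contraction $T_p(t)$ on $L^p(\R^d;\C^m)$; the family is consistent with $\{T(t)\}$ and with the $\{T_q(t)\}$ because all of them coincide with $T(\cdot)$ on the dense intersection of the corresponding $L^r$-spaces (up to passing to a.e.\ convergent subsequences). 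The semigroup law holds on $L^2\cap L^p$, where $T_p(s)f=T(s)f$ again lies in $L^2\cap L^p$, hence on all of $L^p$ by density.

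\emph{Step 3 (strong continuity and the generator).} By contractivity it suffices to prove $T_p(t)f\to f$ as $t\to0^+$ for $f$ in the dense set $C_c^\infty(\R^d;\C^m)$. Fix such an $f$. Then $f\in D(A)$ and $\mathcal Af=\Delta_Q f-Vf$ is bounded with compact support, hence lies in $L^2(\R^d;\C^m)\cap L^p(\R^d;\C^m)$, and in $L^2$ one has $T(t)f-f=\int_0^t T(s)\mathcal Af\,ds$. Pairing with $g\in L^2(\R^d;\C^m)\cap L^{p'}(\R^d;\C^m)$ and using $T(s)\mathcal Af=T_p(s)\mathcal Af$, we obtain $|\la T_p(t)f-f,g\ra_{p,p'}|\le\int_0^t\|T_p(s)\mathcal Af\|_p\|g\|_{p'}\,ds\le t\,\|\mathcal Af\|_p\|g\|_{p'}$, and since $L^2\cap L^{p'}$ is norming for $L^p$ this gives $\|T_p(t)f-f\|_p\le t\|\mathcal Af\|_p\to 0$. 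Thus $\{T_p(t)\}_{t\ge 0}$ is a $C_0$-semigroup. Refining the same estimate, $\|\tfrac1t(T_p(t)f-f)-\mathcal Af\|_p\le\tfrac1t\int_0^t\|T_p(s)\mathcal Af-\mathcal Af\|_p\,ds$, which tends to $0$ as $t\to0^+$ because $s\mapsto T_p(s)\mathcal Af$ is now $L^p$-continuous. Hence $f\in D(A_p)$ and $A_pf=\mathcal Af$ for all $f\in C_c^\infty(\R^d;\C^m)$.

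\emph{On the main obstacle.} The only genuinely nontrivial point is the passage from the $L^2$-theory to \emph{strong} continuity on $L^p$: the Trotter--Kato formula together with Fatou's lemma delivers contractivity essentially for free, but continuity at $t=0$ requires an additional input such as the integral identity above on the core $C_c^\infty(\R^d;\C^m)$. (Alternatively, one may first show $T_p(t)f\rightharpoonup f$ weakly for every $f\in L^p$ by testing against $L^2\cap L^{p'}$ and using $\|T_p(t)f\|_p\le\|f\|_p$, and then invoke the Radon--Riesz property of the uniformly convex space $L^p$, $1<p<\infty$, to upgrade weak convergence to norm convergence.) Consistency, the semigroup law, and the identification of $A_p$ on test functions are then routine.
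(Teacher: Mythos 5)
Your proposal is correct, and its overall architecture (Trotter--Kato factors are $L^p$-contractions, Fatou's lemma transfers contractivity to the limit, density extension, then strong continuity and the generator identified on test functions via the Duhamel identity) coincides with the paper's. The one step where you genuinely diverge is strong continuity. The paper picks $p^*\in(1,\infty)$ and $\theta\in(0,1)$ with $1/p=(1-\theta)/2+\theta/p^*$ and uses the interpolation inequality $\|T(t)f-f\|_p\le \|T(t)f-f\|_2^{1-\theta}\|T(t)f-f\|_{p^*}^{\theta}$ together with contractivity in $L^{p^*}$, so that $L^2$-strong continuity alone forces $L^p$-strong continuity on $C_c^\infty(\R^d;\C^m)$. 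You instead pair the $L^2$-identity $T(t)f-f=\int_0^t T(s)\mathcal{A}f\,ds$ against the norming set $L^2\cap L^{p'}$ to get the quantitative bound $\|T_p(t)f-f\|_p\le t\,\|\mathcal{A}f\|_p$ on test functions. Both are valid; the paper's route is shorter and needs nothing beyond contractivity on a third $L^{p^*}$-space, while yours yields a Lipschitz estimate at $t=0$ and flows seamlessly into the generator identification (the paper, too, ultimately uses the same Duhamel identity for that last step, so the two proofs reconverge there). Two small points to keep clean in your version: the transfer of the integral identity from $L^2$ to an $L^p$-Bochner integral should be justified by the continuity of $s\mapsto T_p(s)\mathcal{A}f$ in $L^p$, which is available only after strong continuity has been established — your ordering (weak estimate first, then the refined one) does respect this; and the domination $|e^{t\Delta_Q}u|\le e^{t\Delta_Q}|u|$ for vector-valued $u$ deserves a word (triangle inequality for the positive kernel), though it is standard.
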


\begin{proof}
Let $1<p<\infty$ and $f\in L^2(\R^d,\C^m)\cap L^p(\R^d,\C^m)$. Assumption \eqref{eq.diss} yields $|e^{-tV(x)}f(x)|\le|f(x)|$ for all $x\in\R^d$ and $t\ge0$. So $\|e^{-tV}f\|_p\le\|f\|_p$, for all $t\ge 0$.

On the other hand, it is well-known that $\{e^{t\Delta_Q}\}_{t\ge0}$ is a contractive $C_0$-semigroup on $L^p(\R^d,\C^m)$. Consequently, for every $t>0$, both $e^{t\Delta_Q}$ and $e^{-tV}$ leave the set
\[ \mathcal{B}_p\coloneqq\{f\in L^2(\R^d,\C^m)\cap L^p(\R^d,\C^m): \|f\|_p\le1\} \]
invariant. Since $\mathcal{B}_p$ is a closed subset of $L^2(\R^d;\C^m)$ as a consequence of Fatou's lemma, it
follows from the Trotter--Kato formula \ref{Trotter-Kato}, that $T(t)\mathcal{B}_p \subset \mathcal{B}_p$.
It follows that $\|T(t)f\|_p \leq \|f\|_p$ for all $f\in L^2(\R^d;\C^m)\cap L^p(\R^d;\R^m)$. By density, we can extend
$T(t)$ to a contraction $T_p(t)$ on $L^p(\R^d;\C^m)$. The semigroup law for $\{T_p(t)\}_{t\geq 0}$ follows immediately.

Let us prove that $\{T_p(t)\}_{t\geq 0}$ is strongly continuous. To that end, pick $p^* \in (1,\infty)$ and $\theta \in (0,1)$
such that $1/p = (1-\theta)/2+\theta/p^*$. By the interpolation inequality, we find
\[ \|T(t)f-f\|_p\le \|T(t)f-f\|_2^{1-\theta}\|T(t)f-f\|_{p^*}^\theta\le 2^\theta\|T(t)f-f\|_2^{1-\theta}\|f\|_{p^*}^\theta, \]
It follows that $\lim_{t\to 0}T(t)f=f$ in $L^p(\R^d;\C^m)$ for all $f \in C_c^\infty(\R^d,\C^m)$. By density, the strong continuity of the semigroup $\{T_p(t)\}_{t\ge0}$ follows.

Let us now turn to the generator of $\{T_p(t)\}_{t\geq 0}$.
Fix $t>0$ and $f\in C_c^\infty(\R^d;\C^m)$. Then $f\in D(A)$ and
\begin{equation}\label{formule tous les jours}
T(t)f-f=\int_{0}^{t}AT(s)f\,ds=\int_{0}^{t}T(s)\mathcal{A}f\,ds,
\end{equation}
where the integral is computed in $L^2(\R^d;\C^m)$. However, $\mathcal{A}f$ has compact support whence
$\mathcal{A}f\in L^p(\R^d; C^m)$ and the map $t\mapsto T_p(t)\mathcal{A}f$ is  continuous  from $[0,\infty)$ into $L^p(\R^d;\C^m)$. Hence, \eqref{formule tous les jours} holds true in $L^p(\R^d,\C^m)$, i.e.
\[T_p(t)f-f=\int_{0}^{t}T_p(s)\mathcal{A}f\,ds.\]
This implies that $t\mapsto T_p(t)f$ is differentiable in $[0,\infty)$. It follows that
$f\in D(A_p)$ and $A_p f=\mathcal{A}f$.
\end{proof}

\begin{rmk}
It is also possible to extend $T$ to a consistent contraction semigroup $\{T_1(t)\}_{t\geq 0}$ on $L^1(\R^d; \C^m)$.
Mutatis mutandis, the proof is that of \cite[Theorem 3.7]{KLMR}.

\end{rmk}

\section{Maximal domain of \texorpdfstring{$A_p$}{} and further properties}\label{sec. maximal domain}
In this section we characterize the domain $D(A_p)$ of the generator of $\{T_p(t)\}_{t\geq 0}$.
More precisely, we prove that it is the maximal domain in $L^p$.
We first show that the space of test functions is a core for $A_p$.

\begin{prop}
  Let $1<p<\infty$ and assume Hypotheses \ref{Hyp}. Then,
 \begin{itemize}
 \item[(i)] the set of test functions $C_c^\infty(\R^d;\C^m)$ is a core for $A_p$,
 \item[(ii)] the semigroup $\{T_p(t)\}_{t\ge0}$ is given by the Trotter--Kato product formula.
 \end{itemize}
 \end{prop}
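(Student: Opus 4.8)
The plan is to prove both parts simultaneously via a ``transference'' argument that lifts the $L^2$-results already established to the $L^p$-setting. For part (i), I would follow the exact same strategy used in Proposition \ref{C_c is a core for A} for the operator $A$, but now in $L^p$. Specifically, since $-A_p$ is dissipative (as $\{T_p(t)\}_{t\geq 0}$ is contractive by Theorem \ref{generation-Lp}) and $C_c^\infty(\R^d;\C^m) \subset D(A_p)$ with $A_p\varphi = \mathcal{A}\varphi$, it suffices by the standard core criterion to show that $(1-A_p)C_c^\infty(\R^d;\R^m)$ is dense in $L^{p}(\R^d;\R^m)$. Suppose $v \in L^{p'}(\R^d;\R^m)$ annihilates this set, i.e. $\langle (1-\mathcal{A})\varphi, v\rangle_{p,p'} = 0$ for all $\varphi \in C_c^\infty(\R^d;\R^m)$. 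Then $v - \Delta_Q v + V^* v = 0$ in the sense of distributions, so $\Delta_Q v_j = v_j + \sum_{l} v_{lj}v_l \in L^{p'}_\loc(\R^d)$ since $V \in L^\infty_\loc$ and $v \in L^{p'}_\loc$; by local elliptic regularity (\cite[Theorem 7.1]{Agmon}) we get $v_j \in W^{2,p'}_\loc(\R^d)$, hence $|v| \in W^{2,p'}_\loc(\R^d)$ by Lemma \ref{l.normsobolev} and the smoothing argument $|v| = \lim_\eps (|v|^2+\eps^2)^{1/2}$. Then equation \eqref{eq.DeltaQ}, and consequently the pointwise (a.e.) Kato-type inequality
\[
\Delta_Q |v| \geq \frac{\chi_{\{v\neq 0\}}}{|v|}\sum_{j=1}^m v_j \Delta_Q v_j \geq \frac{\chi_{\{v\neq 0\}}}{|v|}\big(|v|^2 + \langle V^*v, v\rangle\big) \geq |v|,
\]
holds, where we used $\Re\langle V^*v, v\rangle = \Re\langle Vv, v\rangle \geq 0$ from \eqref{eq.diss}.

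The crucial new point, and the step I expect to be the main obstacle, is extracting $v = 0$ from $\Delta_Q |v| \geq |v|$ when a priori we only know $|v| \in L^{p'}$ rather than $|v| \in L^2$. In Proposition \ref{C_c is a core for A} the test function $\zeta_n |v|$ was used, which works because $|v| \in L^2$ makes all integrals finite; here I would instead test against $\zeta_n |v|^{p'-1}$ (or, to stay safely in the smooth regime, against $\zeta_n (|v|^2+\delta)^{(p'-2)/2}|v|$ and let $\delta \to 0$ afterwards), integrate by parts, and exploit that $\nabla\big(|v|^{p'-1}\big) = (p'-1)|v|^{p'-2}\nabla|v|$ has a sign-aligned inner product with $\nabla |v|$ under $Q$. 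This produces
\[
0 \geq \int_{\R^d}|v|^{p'}\zeta_n\,dx + (p'-1)\int_{\R^d}|v|^{p'-2}|\nabla|v||_Q^2\,\zeta_n\,dx + \int_{\R^d}\langle \nabla\zeta_n, \nabla|v|\rangle_Q |v|^{p'-1}\,dx,
\]
and the last term is controlled by $\|\nabla\zeta_n\|_\infty \eta_2^{1/2}$ times $\int_{\supp\nabla\zeta_n} |v|^{p'-2}|\nabla|v||_Q |v|\,dx$; using $|v|^{p'-2}|\nabla|v||_Q |v| = \big(|v|^{p'-2}|\nabla|v||_Q^2\big)^{1/2}\big(|v|^{p'-2}|v|^2\big)^{1/2} = \big(|v|^{p'-2}|\nabla|v||_Q^2\big)^{1/2}|v|^{p'/2}$ and Cauchy--Schwarz, this term is absorbed into a small fraction of the first two (good) terms plus an $O(\|\nabla\zeta_n\|_\infty^2)$ remainder times $\int_{\supp\nabla\zeta_n}|v|^{p'}\,dx \to 0$. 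Since $\|\nabla\zeta_n\|_\infty = \frac1n\|\nabla\zeta\|_\infty \to 0$ and $|v|^{p'}$ is integrable, letting $n \to \infty$ forces $\|v\|_{p'} = 0$. The one technical care needed is to keep every quantity finite throughout the manipulation, which is why I would carry the regularizing parameter $\delta$ and only send it to zero at the end; alternatively one observes $|v|^{p'-2}|\nabla|v||^2 = \frac{4}{p'^2}|\nabla |v|^{p'/2}|^2$ so that $|v|^{p'/2} \in W^{1,2}_\loc$, and then the argument mirrors the $L^2$-case applied to $w := |v|^{p'/2}$ with the inequality $\Delta_Q w \geq \frac{2}{p'}w$ derived from $\Delta_Q|v|\ge|v|$ via the chain rule.

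For part (ii), once $C_c^\infty(\R^d;\C^m)$ is known to be a core for $A_p$, the Trotter--Kato product formula for $\{T_p(t)\}_{t\geq 0}$ follows from the same abstract result \cite[Corollary III-5.8]{Eng-Nag} already invoked in the $L^2$-case: both $\{e^{t\Delta_Q}\}_{t\geq 0}$ and $\{e^{-tV}\}_{t\geq 0}$ are contraction semigroups on $L^p(\R^d;\C^m)$ (the former by classical theory, the latter by the pointwise bound $|e^{-tV(x)}\xi| \leq |\xi|$ from \eqref{eq.diss}), and $C_c^\infty(\R^d;\C^m)$ is a common core contained in $D(\Delta_Q) \cap D(V)$ on which the generator $A_p$ acts as $\mathcal{A} = \Delta_Q - V$. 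Hence the hypotheses of the product formula theorem are met and $T_p(t)f = \lim_{n\to\infty}\big[e^{\frac{t}{n}\Delta_Q}e^{-\frac{t}{n}V}\big]^n f$ for all $f \in L^p(\R^d;\C^m)$, $t > 0$. I expect part (ii) to be routine given part (i); essentially all the work is in the core property and, within it, in the weighted integration-by-parts estimate that replaces the $L^2$-energy computation.
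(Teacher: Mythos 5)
Your proposal is correct and follows essentially the same route as the paper: reduce the core property to showing that the annihilator of $(1-\mathcal{A})C_c^\infty(\R^d;\R^m)$ in $L^{p'}$ is trivial, upgrade the annihilating $u$ to $W^{2,p'}_{\loc}$ by local elliptic regularity, kill it by a weighted integration by parts against $\zeta_n(|u|^2+\varepsilon^2)^{(p'-2)/2}u$ (resp.\ $\zeta_n|u|^{p'-2}u$) with $\|\nabla\zeta_n\|_\infty\to 0$, and then obtain (ii) from \cite[Corollary III-5.8]{Eng-Nag} via the common core. The only (harmless) deviations are that the paper multiplies the vector equation componentwise and uses \eqref{gradient |.| <...} rather than first passing to the scalar inequality $\Delta_Q|u|\ge|u|$ (which sidesteps justifying Kato's inequality outside $H^1_{\loc}$ when $p'<2$), and it disposes of the cross term by a second integration by parts onto $\Delta_Q\zeta_n$ instead of your Cauchy--Schwarz absorption.
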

 \begin{proof}
 (i) Fix $1<p<\infty$. Since $-A_p$ is m-accretive and the coefficients of $\mathcal{A}$ are real, it suffices to show that $(1-A_p)C_c^\infty(\R^d,\R^m)$ is dense in $L^p(\R^d,\R^m)$. Let $u\in L^{p'}(\R^d;\R^m)$ be such that $\langle(1-\mathcal{A})\varphi,u\rangle_{p,p'}=0$ for all $\varphi\in C_c^\infty(\R^d;\R^m)$. So,
 \begin{equation}\label{ellip res equ}
 u-\Delta_Q u+V^* u=0
 \end{equation}
 in the sense of distributions. In particular, \[\Delta_Q u_j=u_j+\sum_{l=1}^{m}v_{lj}u_l\;\in L^{p'}_\loc(\R^d) \]
for all $j\in\{1,\dots,m\}$. By local elliptic regularity, see \cite[Theorem 7.1]{Agmon}, $u_j\in W^{2,p'}_{\loc}(\R^d)$ for all $j\in\{1,\dots,m\}$. Then, \eqref{ellip res equ} holds true almost everywhere on $\R^d$.

Consider $\zeta\in C_c^\infty(\R^d)$ such that $\chi_{B(1)}\leq\zeta\leq\chi_{B(2)}$ and define $\zeta_n(\cdot)=\zeta(\cdot/n)$ for $n\in\N$. For $p'<2$ we multiply equation \eqref{ellip res equ} by $\zeta_n (|u|^2+\varepsilon^2)^{\frac{p'-2}{2}}u\in L^p(\R^d,\R^m)$ for $\varepsilon>0,\,n\in\N$. Integrating by parts, we obtain
 \begin{eqnarray*}
 0 &=& \int_{\R^d}\zeta_n (|u|^2+\varepsilon^2)^{\frac{p'-2}{2}}|u|^2\, dx+\sum_{j=1}^{m}\int_{\R^d}\Big\langle\nabla u_j,\nabla\big(\zeta_n (|u|^2+\varepsilon^2)^{\frac{p'-2}{2}}u_j\big)\Big\rangle_{Q}\, dx\\
 & & +\int_{\R^d}\zeta_n(|u|^2+\varepsilon^2)^{\frac{p'-2}{2}}\langle V^* u,u\rangle\,dx\\
 &\ge & \int_{\R^d}\zeta_n(|u|^2+\varepsilon^2)^{\frac{p'-2}{2}}|u|^2\,dx+\sum_{j=1}^{m}\int_{\R^d}|\nabla u_j|_{Q(x)}^2 \zeta_n(|u|^2+\varepsilon^2)^{\frac{p'-2}{2}}\,dx\\
 & & +\sum_{j=1}^{m}\int_{\R^d}\langle\nabla u_j,\nabla\zeta_n\rangle_{Q} (|u|^2+\varepsilon^2)^{\frac{p'-2}{2}}u_j\,dx\\
 & & +(p'-2)\sum_{j=1}^{m}\int_{\R^d}\langle\nabla u_j,\nabla|u|\rangle_{Q} u_j|u|\zeta_n(|u|^2+\varepsilon^2)^{\frac{p'-4}{2}}\,dx\\
 &\ge & \int_{\R^d}\zeta_n(|u|^2+\varepsilon^2)^{\frac{p'-2}{2}}|u|^2\, dx+\sum_{j=1}^{m}\int_{\R^d}|\nabla u_j|_{Q(x)}^2 \zeta_n(|u|^2+\varepsilon^2)^{\frac{p'-2}{2}}\,dx\\
  & & +\int_{\R^d}\langle\nabla|u|,\nabla\zeta_n\rangle_{Q}(|u|^2+\varepsilon^2)^{\frac{p'-2}{2}}|u| \,dx\\
 & & +(p'-2)\int_{\R^d}|\nabla|u||_{Q}^2\zeta_n |u|^2(|u|^2+\varepsilon^2)^{\frac{p'-4}{2}}\,dx.\\
 \end{eqnarray*}
 It follows now from \eqref{gradient |.| <...} that
 \begin{eqnarray*}
 0 &\ge & \int_{\R^d}\zeta_n(|u|^2+\varepsilon^2)^{\frac{p'-2}{2}}|u|^2\, dx+\int_{\R^d}\langle\nabla|u|,\nabla\zeta_n\rangle_{Q} (|u|^2+\varepsilon^2)^{\frac{p'-2}{2}}|u| \,dx\\
  & & +(p'-1)\int_{\R^d}|\nabla|u||_{Q}^2\zeta_n |u|^2(|u|^2+\varepsilon^2)^{\frac{p'-4}{2}}\,dx\\
  &\ge & \int_{\R^d}\zeta_n(|u|^2+\varepsilon^2)^{\frac{p'-2}{2}}|u|^2\, dx+\frac{1}{p'}\int_{\R^d}\langle\nabla((|u|^2+\varepsilon^2)^{\frac{p'}{2}})),\nabla\zeta_n\rangle_{Q} \,dx\\
  &=& \int_{\R^d}\zeta_n(|u|^2+\varepsilon^2)^{\frac{p'-2}{2}}|u|^2\, dx-\frac{1}{p'}\int_{\R^d} \Delta_Q\zeta_n (|u|^2+\varepsilon^2)^{\frac{p'}{2}})\,dx.
 \end{eqnarray*}
 Upon $\varepsilon\to 0$, we find
 $$\int_{\R^d}\zeta_n |u|^{p'}\,dx-\frac{1}{p'}\int_{\R^d} \Delta_Q\zeta_n|u|^{p'}\,dx\le 0.$$
As in the proof of Proposition \ref{C_c is a core for A}, upon $n \to \infty$, we conclude that
 $$\int_{\R^d}|u|^{p'}\, dx\le 0.$$
 Therefore, $u=0$.\\
 In the case when $p'>2$, one multiplies in \eqref{ellip res equ} by $\zeta_n|u|^{p'-2}u$ and argues in a similar way.\\
 (ii) This is an immediate consequence of (i) and \cite[Corollary III-5.8]{Eng-Nag}.
\end{proof}
In the next result we show that the domain $D(A_p)$ is equal to the $L^p$-maximal domain of $\mathcal{A}$.
\begin{prop}\label{Coincidence of domains}
Let $1<p<\infty$. Assume Hypotheses \ref{Hyp}. Then
$$D(A_p)=\{u\in L^p(\R^d,\C^m)\cap W^{2,p}_{\loc}(\R^d;\C^m) : \mathcal{A}u\in L^p(\R^d;\C^m)\}:=D_{p,\max}(\mathcal{A}).$$
\end{prop}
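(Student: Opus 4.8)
The plan is to prove the two inclusions $D(A_p)\subseteq D_{p,\max}(\mathcal{A})$ and $D_{p,\max}(\mathcal{A})\subseteq D(A_p)$ separately. The first is a soft consequence of the core property established above together with interior elliptic regularity; the second is the substantial part and, once again, rests on a Kato-type estimate followed by a cut-off argument, in complete analogy with the proof that $C_c^\infty(\R^d;\C^m)$ is a core for $A_p$.

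For the inclusion $D(A_p)\subseteq D_{p,\max}(\mathcal{A})$, let $u\in D(A_p)$. I would first note that $\mathcal{A}u=A_pu$ in the sense of distributions: choosing, by the core property, functions $\varphi_n\in C_c^\infty(\R^d;\C^m)$ with $\varphi_n\to u$ and $A_p\varphi_n=\mathcal{A}\varphi_n\to A_pu$ in $L^p(\R^d;\C^m)$, and testing against $\psi\in C_c^\infty(\R^d;\C^m)$ while integrating by parts (the formal adjoint of $\mathcal{A}$ being $\bar{\mathcal{A}}=\Delta_Q-V^*$), one obtains $\langle\mathcal{A}u,\psi\rangle=\langle A_pu,\psi\rangle$. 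Hence $\mathcal{A}u=A_pu\in L^p(\R^d;\C^m)$ and therefore, componentwise, $\Delta_Q u_j=(A_pu)_j+(Vu)_j\in L^p_\loc(\R^d)$, because the entries of $V$ are in $L^\infty_\loc(\R^d)$ and $u\in L^p$. By interior $L^p$-regularity for the uniformly elliptic operator $\Delta_Q$ with (continuous) coefficients, see \cite[Theorem 7.1]{Agmon}, $u_j\in W^{2,p}_\loc(\R^d)$ for every $j$, so that $u\in D_{p,\max}(\mathcal{A})$.

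For the reverse inclusion, let $u\in D_{p,\max}(\mathcal{A})$ and put $f\coloneqq u-\mathcal{A}u\in L^p(\R^d;\C^m)$. Since $\{T_p(t)\}_{t\ge0}$ is a contraction semigroup (Theorem \ref{generation-Lp}), $1\in\rho(A_p)$, so there exists $v\in D(A_p)$ with $v-A_pv=f$; by the inclusion just proved $v\in W^{2,p}_\loc(\R^d;\C^m)$ and $v-\mathcal{A}v=f$. Consequently $w\coloneqq u-v\in L^p(\R^d;\C^m)\cap W^{2,p}_\loc(\R^d;\C^m)$ satisfies $w-\Delta_Qw+Vw=0$, an identity which holds a.e.\ since each of the three terms lies in $L^p_\loc(\R^d;\C^m)$. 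It remains to show $w=0$, which then gives $u=v\in D(A_p)$. Since $\mathcal{A}$ has real coefficients we may assume $w$ real-valued (otherwise argue separately with $\Re w$ and $\Im w$); then $\Delta_Qw_j=w_j+(Vw)_j$ a.e. Fixing $\zeta\in C_c^\infty(\R^d)$ with $\chi_{B(1)}\le\zeta\le\chi_{B(2)}$ and setting $\zeta_n=\zeta(\cdot/n)$, one multiplies this equation by $\zeta_n(|w|^2+\eps^2)^{\frac{p-2}{2}}w_j$ (or by $\zeta_n|w|^{p-2}w_j$ when $p\ge2$), sums over $j$ and integrates by parts. Using the inequality $|\nabla|w||_Q^2\le\sum_j|\nabla w_j|_Q^2$ of Lemma \ref{l.normsobolev} to dispose of the gradient terms and the nonnegativity of $\langle Vw,w\rangle$ coming from \eqref{eq.diss}, the computation is word for word the one in the proof of the preceding proposition (with $p'$ replaced by $p$ and $V^*$ by $V$) and leads to
\[
\int_{\R^d}\zeta_n(|w|^2+\eps^2)^{\frac{p-2}{2}}|w|^2\,dx\le\frac1p\int_{\R^d}(|w|^2+\eps^2)^{\frac p2}\,\Delta_Q\zeta_n\,dx .
\]
Letting $\eps\to0$ yields $\int_{\R^d}\zeta_n|w|^p\,dx\le\frac1p\int_{\R^d}|w|^p\,\Delta_Q\zeta_n\,dx$; since $\Delta_Q\zeta_n$ is supported in $B(2n)\setminus B(n)$ with $\|\Delta_Q\zeta_n\|_\infty\to0$ (as in the proof of Proposition \ref{C_c is a core for A}) and $|w|^p\in L^1(\R^d)$, passing to the limit $n\to\infty$ forces $\|w\|_p=0$.

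The main obstacle is the very last step. One must check that $\zeta_n(|w|^2+\eps^2)^{(p-2)/2}w$ is a legitimate test function for the equation solved by the merely locally-$W^{2,p}$ (but globally $L^p$) function $w$; here the elementary bound $(|w|^2+\eps^2)^{(p-2)/2}|w|\le|w|^{p-1}\in L^{p'}(\R^d)$ together with the global integrability $|w|^p\in L^1(\R^d)$ is what makes the argument go through, and the sign of the terms produced by differentiating the weight has to be controlled uniformly in the two ranges $1<p<2$ and $p\ge2$. All of this is exactly parallel to the corresponding computation in the proof that $C_c^\infty(\R^d;\C^m)$ is a core for $A_p$, which is why I would organise this proof as a reduction to that computation rather than reproducing it in detail.
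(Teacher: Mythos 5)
Your proposal is correct and follows essentially the same route as the paper: the first inclusion via the core property, $L^\infty_\loc$-boundedness of $V$ and interior elliptic regularity, and the reverse inclusion by reducing to injectivity of $\lambda-\mathcal{A}$ on $D_{p,\max}(\mathcal{A})$ (you make the resolvent reduction explicit where the paper only states it), which is then established by the same multiplication by $\zeta_n(|w|^2+\eps^2)^{(p-2)/2}w$ (resp.\ $\zeta_n|w|^{p-2}w$), integration by parts, Lemma \ref{l.normsobolev}, accretivity of $V$, and the $\eps\to0$, $n\to\infty$ limits. No substantive differences.
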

\begin{proof}
 Let us show first that $D(A_p)\subseteq D_{p,\max}(\mathcal{A})$. Take $u\in D(A_p)$. Since $C_c^\infty(\R^d,\C^m)$ is a core for $A_p$, it follows that there exists $(u_n)_n\subset C_c^\infty(\R^d,\C^m)$ such that $u_n\to u$ and $\mathcal{A}u_n\to A_p u$ in $L^p(\R^d,\C^m)$, and in particular in $L^p_\loc(\R^d,\C^m)$. As $V\in L^\infty_\loc(\R^d,\C^m)$, we deduce that $V u_n\to Vu$ in $L^p_\loc(\R^d,\C^m)$. Consequently, $$\Delta_Q u=A_pu+Vu=\lim_{n\to\infty}\mathcal{A}u_n+Vu_n \in L^p_\loc(\R^d,\C^m).$$
 So, by local elliptic regularity, we obtain $u\in W^{2,p}_\loc(\R^d,\C^m)$. Hence, $\mathcal{A}u=A_p u$ belongs to $L^p(\R^d,\C^m)$, which shows that $u\in D_{p,\max}(\mathcal{A})$.

In order to prove the other inclusion it suffices to show that $\lambda-\mathcal{A}$ is injective on $D_{p,\max}(\mathcal{A})$, for some $\lambda>0$. To this end, let $u\in D_{p,\max}(\mathcal{A})$ such that $(\lambda-\mathcal{A})u=0$.
Assume that $p\geq 2$. Multiplying by $\zeta_n |u|^{p-2}u$ and integrating (by part) over $\R^d$ one obtains
 \begin{eqnarray*}
 0&=&\lambda\int_{\R^d}\zeta_n(x)|u(x)|^p dx+\int_{\R^d}\sum_{j=1}^{m}\langle Q\nabla u_j,\nabla(|u|^{p-2}u_j\zeta_n)\rangle dx\\
 & & +\int_{\R^d}\langle V(x)u(x),u(x)\rangle |u(x)|^{p-2}\zeta_n(x)dx\\
 &\geq & \lambda\int_{\R^d}\zeta_n(x)|u(x)|^p dx+\int_{\R^d}|u(x)|^{p-2}\zeta_n(x)\sum_{j=1}^{m}\langle Q(x)\nabla u_j(x),\nabla u_j(x)\rangle dx\\
 & & +\int_{\R^d}\sum_{j=1}^{m}|u(x)|^{p-2}u_j(x)\langle Q(x)\nabla u_j(x),\nabla \zeta_n(x)\rangle dx\\
 & & +(p-2)\int_{\R^d}|u(x)|^{p-2}\zeta_n(x)\langle Q(x)\nabla |u|(x),\nabla |u|(x)\rangle dx\\
 &\geq &\lambda\int_{\R^d}\zeta_n(x)|u(x)|^p dx+\int_{\R^d}|u(x)|^{p-1}\langle Q(x)\nabla |u|(x),\nabla \zeta_n(x)\rangle dx\\
 &\geq &\lambda\int_{\R^d}\zeta_n(x)|u(x)|^p dx+\frac{1}{p}\int_{\R^d}\langle Q(x)\nabla \zeta_n(x), \nabla |u|^p(x)\rangle dx\\
 &\geq & \lambda\int_{\R^d}\zeta_n(x)|u(x)|^p dx-\frac{1}{p}\int_{\R^d}\Delta_Q\zeta_n(x) |u(x)|^{p}dx.
 \end{eqnarray*}
 So, as in the proof of the above proposition, we conclude that $u=0$.

 The case $p<2$ can be obtained similarly, by multiplying the equation $(\lambda -\mathcal{A})u=0$ by $\zeta_n(|u|^2+\varepsilon)^{\frac{p-2}{2}}u$, $\varepsilon>0$, instead of $\zeta_n|u|^{p-2}u$.
\end{proof}

We end this article by giving an example which shows that generation of $C_0$-semigroups for scalar-valued Schr\"odinger operators with complex potentials can be deduced from the vector-valued case developed in the previous sections.

\begin{examp}\label{exa-complexe}
 Let us consider the matrix potential
\[ V(x):=\begin{pmatrix}
w(x) & -v(x)\\
v(x) & w(x)
\end{pmatrix}=v(x)\begin{pmatrix}
0 & -1\\
1 & 0
\end{pmatrix}+w(x)\begin{pmatrix}
1 & 0\\
0 & 1
\end{pmatrix}\]
where $v\in L^\infty_{\loc}(\R^d)$ and $0\le w\in L^\infty_{\loc}(\R^d)$. Then Hypotheses \ref{Hyp} are satisfied and we can deduce
from Theorem \ref{generation-Lp} and Proposition \ref{Coincidence of domains}, that $A_p$, the $L^p$-realization of the operator
$$\mathcal{A} =\begin{pmatrix}
\Delta & 0\\
0 & \Delta
\end{pmatrix}-V\, \mbox{\ with domain }\{u\in L^p(\R^d;\C^2)\cap W^{2,p}_{\loc}(\R^d;\C^2) : \mathcal{A}u\in L^p(\R^d,\C^2)\},$$
generates a $C_0$-semigroup on $L^p(\R^d;\C^2)$. Moreover $C_c^\infty(\R^d;\R^2)$ is a core for $A_p$.\\
Diagonalizing the matrix $\begin{pmatrix}
0 & -1\\
1 & 0
\end{pmatrix}$  we see that $A_p$ is similar to a diagonal operator. More precisely, with $P=\begin{pmatrix}
1 & 1\\
-i & i
\end{pmatrix}$
we have
$$P^{-1}L_pP=\begin{pmatrix}
\Delta & 0\\
0 & \Delta
\end{pmatrix}-\begin{pmatrix}
iv+w & 0\\
0 & -iv+w
\end{pmatrix}.$$

It follows that the Schr\"odinger operators  $\Delta \pm iv-w$ with domain
 $$\{f\in L^p(\R^d)\cap W^{2,p}_{\loc}(\R^d,) : \Delta f\pm ivf-wf \in L^p(\R^d)\}$$
 generate $C_0$-semigroups on $L^p(\R^d)$. Moreover $C_c^\infty(\R^d)$ is a core for this operator.
 In general, these semigroups can not be expected to be  not analytic, see \cite[Example 3.5]{KLMR}.
 However, imposing additional assumptions on the potential $V$, e.g.\ that the numerical range is contained in a sector,
 one can also prove analyticity of the semigroup, see \cite[Proposition 4.5]{Mai-Rha}. More precisely, there we find the following result:
 \begin{prop}
 Assume Hypotheses \ref{Hyp} and there is a positive constant $C$ such that
 $$\Re \langle V(x)\xi ,\xi \rangle \ge C|\Im \langle V(x)\xi ,\xi \rangle |,\quad \forall  x\in \R^d,\,\xi \in \C^m.$$ Then the semigroup $\{T_p(t)\}_{t\ge 0}$ can be extended to an analytic semigroup on
 $L^p(\R^d,\C^m)$.
 \end{prop}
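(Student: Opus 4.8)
The plan is to reduce the analyticity claim to a quadratic-form estimate showing that, under the sector condition on the numerical range of $V$, the realization $A_p$ generates an analytic semigroup. First I would treat the case $p=2$ by a direct form argument. On $L^2(\R^d;\C^m)$ consider, for $u\in D(A)$, the expression $-\langle Au,u\rangle = \int_{\R^d}|\nabla u|_Q^2\,dx + \int_{\R^d}\langle V(x)u(x),u(x)\rangle\,dx$. The first term is nonnegative and real; by the hypothesis $\Re\langle V(x)\xi,\xi\rangle \geq C|\Im\langle V(x)\xi,\xi\rangle|$ the second term lies in the sector $\{z\in\C : |\Im z| \leq C^{-1}\Re z\}$. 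Hence the numerical range of $-A$ is contained in a sector of half-angle $\theta_0 = \arctan(C^{-1}) < \pi/2$ around the positive real axis, and since $-A$ is already known to be m-accretive (equivalently $A$ generates a contraction $C_0$-semigroup), it follows by the standard characterization, cf.\ \cite[Theorem II-4.6]{en00} or \cite[Theorem 1.54]{ouhabaz}, that $A$ generates an analytic semigroup of angle $\pi/2 - \theta_0$ on $L^2(\R^d;\C^m)$.

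Next I would transfer analyticity to $L^p$ for $p\in(1,\infty)$. The clean way is to interpolate: we already know from Theorem~\ref{generation-Lp} that $\{T(t)\}_{t\geq 0}$ extends consistently to contraction semigroups $\{T_p(t)\}_{t\geq 0}$ on every $L^p(\R^d;\C^m)$, $1\le p<\infty$. By the $L^2$-analyticity just proved, $T(t)$ extends to a bounded analytic semigroup on a sector $\Sigma_{\delta}=\{z\in\C\setminus\{0\} : |\arg z|<\delta\}$ with values of uniformly bounded operator norm on $L^2$, while on $L^1$ (using the remark after Theorem~\ref{generation-Lp}) and on $L^\infty$ the operators $T_p(t)$ are contractions for real $t$. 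A Stein-type interpolation argument for the analytic family $z\mapsto T(z)$ then yields that, for each $p\in(1,\infty)$, $\{T_p(t)\}_{t\ge 0}$ extends to a bounded analytic semigroup on a (possibly smaller) sector $\Sigma_{\delta_p}$; equivalently, $A_p$ is sectorial and generates an analytic semigroup on $L^p(\R^d;\C^m)$. Alternatively, and perhaps more in the spirit of the cited \cite[Proposition 4.5]{Mai-Rha}, one can argue directly on $L^p$ by multiplying $(\lambda - \mathcal{A})u$ by $\zeta_n|u|^{p-2}u$ (respectively $\zeta_n(|u|^2+\eps^2)^{(p-2)/2}u$ for $p<2$), exactly as in the proof of Proposition~\ref{Coincidence of domains}, but keeping track of both real and imaginary parts; the sector condition on $V$ then produces a resolvent estimate $\|(\lambda - A_p)^{-1}\| \leq M/|\lambda|$ on a sector $|\arg\lambda| < \pi/2 + \delta_p$, which characterizes analyticity.

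The main obstacle is the passage from $p=2$ to general $p$ while retaining an \emph{explicit} sector; an abstract interpolation gives analyticity but typically a worse angle than the $L^2$ value $\pi/2 - \arctan(C^{-1})$, and one loses the case $p=1$ (which is expected, since the heat semigroup itself is not analytic on $L^1$). If one wants the optimal angle on $L^p$ one must work directly with the form/resolvent estimate on $L^p$: the delicate point there is controlling the cross terms $\sum_j \langle Q\nabla u_j, \nabla(\zeta_n|u|^{p-2}u_j)\rangle$ and, crucially, the imaginary part $\Im\int \zeta_n|u|^{p-2}\langle Vu,u\rangle\,dx$ against the real (Dirichlet-energy) part, using Lemma~\ref{l.normsobolev} and the sector hypothesis, uniformly in the cutoff $\zeta_n$ and the regularization parameter $\eps$. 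Once those estimates are in place, letting $\eps\to 0$ and $n\to\infty$ as before closes the argument and one concludes by \cite[Theorem II-4.6]{en00}.
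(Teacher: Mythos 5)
The paper itself does not prove this proposition: it is imported verbatim from \cite[Proposition 4.5]{Mai-Rha}, and the approach there is, in the spirit of the rest of the present paper, the direct $L^p$ weighted-multiplier estimate that you only sketch as your ``alternative''. Your primary route --- numerical range of $-A$ in the sector $\{|\Im z|\le C^{-1}\Re z\}$ on $L^2$, hence $m$-sectoriality and a bounded analytic contraction semigroup there, followed by Stein interpolation against real-time contractivity on the endpoint spaces --- is a genuinely different and legitimate argument, and since the proposition claims no specific angle, the loss of angle under interpolation is harmless. Two points need tightening for it to be complete. First, for a general $u\in D(A)$ the splitting $\langle -Au,u\rangle=\int|\nabla u|_Q^2\,dx+\int\langle Vu,u\rangle\,dx$ is not a priori justified, because $\Delta_Q u$ and $Vu$ need not separately lie in $L^2$; you should establish the sector bound on the core $C_c^\infty(\R^d;\C^m)$ (Proposition \ref{C_c is a core for A}) and pass to the closure, using that the sector is closed. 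Second, the interpolation step needs a uniform real-time bound on $L^1$ \emph{and} $L^\infty$ (or at least on one endpoint on each side of $2$): the $L^1$ contractivity is the Remark following Theorem \ref{generation-Lp}, and the $L^\infty$ contractivity must be extracted by duality from the $L^1$-contractivity of the adjoint semigroup, whose potential $V^*$ satisfies the same hypotheses since $\Re\langle V^*\xi,\xi\rangle=\Re\langle V\xi,\xi\rangle$; this is not stated anywhere in the paper and you should supply it. By contrast, your second route is not a proof as written: the entire difficulty --- bounding the imaginary part of $\int\zeta_n(|u|^2+\eps^2)^{(p-2)/2}\langle Vu,u\rangle\,dx$ and of the gradient cross-terms by the real Dirichlet part, uniformly in $n$ and $\eps$ --- is exactly what the cited proof does and what you explicitly defer, so it cannot stand on its own.
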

 Using this, we see that these semigroups are analytic provided that there is a constant $C>0$ such that $|v(x)|\le Cw(x)$ for a.e.\ $x\in \R^d$.
\end{examp}

\end{document}